\documentclass{amsart}
\usepackage{mathrsfs}
\usepackage{mathrsfs}
\usepackage{amsfonts}
\usepackage{cases}
\usepackage{latexsym}
\usepackage{amsmath}
\usepackage[all]{xy}
\usepackage{stmaryrd}
\usepackage{amsfonts}
\usepackage{amsmath,amssymb,amscd,bbm,amsthm,mathrsfs,dsfont}

\usepackage{fancyhdr}
\usepackage{amsxtra,ifthen}
\usepackage{verbatim}

\numberwithin{equation}{section}

\theoremstyle{plain}
\newtheorem{theorem}{Theorem}[section]
\newtheorem{lemma}[theorem]{Lemma}
\newtheorem{proposition}[theorem]{Proposition}
\newtheorem{corollary}[theorem]{Corollary}

\theoremstyle{definition}

\newtheorem{example}[theorem]{Example}

\newtheorem{remark}[theorem]{Remark}

\begin{document}

\title[Jordan Derivations of some extension algebras]
{Jordan Derivations of some extension algebras}

\author{Yanbo Li and Feng Wei}

\address{Li: School of Mathematics and Statistics, Northeastern
University at Qinhuangdao, Qinhuangdao, 066004, P.R. China}

\email{liyanbo707@163.com}

\address{Wei: School of Mathematics, Beijing Institute of
Technology, Beijing, 100081, P. R. China}

\email{daoshuo@hotmail.com}

\begin{abstract}
In this paper, we mainly study Jordan derivations of dual extension
algebras and those of generalized one-point extension algebras. It
is shown that every Jordan derivation of dual extension algebras is
a derivation. As applications, we obtain that every Jordan
generalized derivation and every generalized Jordan derivation on
dual extension algebras are both generalized derivations. For
generalized one-point extension algebras, it is proved that under
certain conditions, each Jordan derivation of them is the sum of a
derivation and an anti-derivation.
\end{abstract}

\subjclass[2000]{16W25, 15A78, 47L35.}

\keywords{dual extension, generalized one-point extension, Jordan
derivation, generalized derivation}

\thanks{The work of the first author is supported by Fundamental Research
Funds for the Central Universities (N110423007).}

\maketitle

\section{Introduction}

Let us begin with some definitions. Let $\mathcal{R}$ be a
commutative ring with identity, $\mathcal{A}$ be a unital algebra
over $\mathcal{R}$ and $\mathcal{Z(A)}$ be the center of
$\mathcal{A}$. We denote the \textit{Jordan product} by $a\circ b=ab+ba$ for all
$a,b\in \mathcal{A}$. Recall that an $\mathcal{R}$-linear mapping
$\Theta$ from $\mathcal{A}$ into itself is called a
\textit{derivation} if
$$
\Theta(ab)=\Theta(a)b+a\Theta(b)
$$
for all $a, b\in \mathcal{A}$, an \textit{anti-derivation} if
$$\Theta (ab)=\Theta (b)a+b\Theta(a)$$ for all $a, b\in
\mathcal{A}$,
and a \textit{Jordan derivation} if
$$ \Theta(a\circ b)=\Theta(a)\circ b+a
\circ \Theta(b).$$ Every derivation is obviously a Jordan
derivation. The converse statement is in general not true. Moreover,
in the $2$-torsion free case the definition of a Jordan derivation
is equivalent to
$$ \Theta(x^2)=\Theta(x)x+x\Theta(x)
$$
for all $x\in \mathcal{A}$.  Those Jordan derivations which are
not derivations are said to be \textit{proper}.

\medskip

There has been an increasing interest in the study of Jordan
derivations of various algebras since last decades. The standard
problem is to find out whether a Jordan derivation degenerate to a
derivation. Jacobson and Rickart \cite{JR} proved that every Jordan
derivation of a full matrix algebra over a $2$-torsion free unital
ring is a derivation by relating the problem to the decomposition of
Jordan homomorphisms. In \cite{He}, Herstein showed that every
Jordan derivation from a $2$-torsion free prime ring into itself is
also a derivation. Zhang and Yu \cite{ZY} obtained that every Jordan
derivation on a triangular algebra with faithful assumption is a
derivation. This result was extended to the higher case by Xiao and
Wei \cite{XW1}.  They obtained that any Jordan higher derivation on
a triangular algebra is a higher derivation. The aforementioned
results have been extended to different rings and algebras in
various directions, see \cite{Br, Cu, Lu1, Lu2, ZY} and the
references therein.

Path algebras of quivers come up naturally in the study of tensor
algebras of bimodules over semisimple algebras. It is well known
that any finite dimensional basic $K$-algebra is given by a quiver
with relations when $K$ is an algebraically closed field. In
\cite{GL}, Guo and Li studied the Lie algebra of differential
operators on a path algebra $K\Gamma$ and related this Lie algebra
to the algebraic and combinatorial properties of the path algebra
$K\Gamma$. In \cite{LW2}, the current authors studied Lie
derivations and Jordan derivations of a class of path algebras of
quivers without oriented cycles, which can be viewed as one-point
extensions. It is proved in this case that each Lie derivation is of
the standard form and each Jordan derivation is a derivation.
Moreover, the standard decomposition of a Lie derivation is unique.

For path algebras of finite quivers without oriented cycles, Xi
\cite{X1} constructed their dual extension algebras to study
quasi-hereditary algebras. This construction were  further refined in
details in \cite{DX1, DX3, X2} by Deng and Xi. A more
general construction, the twisted doubles, were studied in
\cite{DX2, KX, X3}. It turns out that dual extension algebras inherit some nice
properties of the representation theory aspect from given
algebras. On the other hand, in \cite{LW3}, the current authors
proved that all Lie derivations of the dual extension algebra are
of the standard form. Then it is natural to ask whether all Jordan
derivations of dual extension algebras are derivations. We will
give a positive answer in this paper. More
precisely, one of the main results of this paper is as follows:\\

\noindent{\bf Theorem.} \,\,
{\em Let $K$ be a field with
${\rm char}\neq 2$. Let $(\Gamma, \rho)$ be a finite
quiver without oriented cycles. Then each Jordan derivation on the
dual extension algebra of path algebra $K(\Gamma,\rho)$ is a derivation.}
\\

It should be remarked that each associative algebra with non trivial
idempotents is isomorphic to a generalized matrix algebra. The form
of Jordan derivations on generalized matrix algebras has been
characterized by current authors in \cite{LWW}. We proved that under
certain conditions, each Jordan derivation is the sum of a
derivation and an anti-derivation. An example of proper Jordan
derivation was also given there. To find a proper Jordan derivation
is not an easy task in general. Recently Bencovi\v{c} \cite{B3}
introduced the so-called singular Jordan derivations which are
usually anti-derivations. He gave a sufficient condition for a
Jordan derivation on a unital algebra with a nontrivial idempotent
to be the sum of a derivation and a singular Jordan derivation. Our
result on Jordan derivations of dual extension algebras implies that
neither the conditions in \cite{LWW} nor those in \cite{B3} are
necessary. Of course, we want to give other examples to illustrate
this fact. The so-called generalized one-point extension algebras
introduced in \cite{LW3} just provide us another class of examples.
We prove that under certain conditions, each Jordan derivation on a
generalized one-point extension algebra is the sum of a derivation
and an anti-derivation.

\smallskip

The paper is organized as follows. After a quick review of some
needed preliminaries on path algebras and generalized matrix
algebras in Section 2, we investigate Jordan derivations of dual
extension algebras in Section 3. Jordan generalized derivations and
generalized Jordan derivations are also considered. Then in Section
4, we study Jordan derivations of generalized one-point extension
algebras. An interesting example will also be given there.

\bigskip

\section{Path algebras and generalized matrix algebras}

In this section, we give a quick review of path algebras of
quivers and generalized matrix algebras. For more details, we
refer the reader to \cite{ARS} and \cite{XW1}.

\subsection{Path algebras}
Recall that a \textit{finite quiver} $\Gamma=(\Gamma_0, \Gamma_1)$
is an oriented graph with the set of vertices $\Gamma_0$ and the
set of arrows between vertices $\Gamma_1$ being both finite. For
an arrow $\alpha$, we write $s(\alpha)=i$ and $e(\alpha)=j$ if it
is from the vertex $i$ to the vertex $j$. A \textit{sink} is a
vertex without arrows beginning at it and a \textit{source} is a
vertex without arrows ending at it. A \textit{nontrivial path} in
$\Gamma$ is an ordered sequence of arrows
$p=\alpha_n\cdots\alpha_1$ such that $e(\alpha_m)=s(\alpha_{m+1})$
for each $1\leq m<n$. Define $s(p)=s(\alpha_1)$ and
$e(p)=e(\alpha_n)$. The length of $p$ is defined to be $n$. A
\textit{trivial path} is the symbol $e_i$ for each $i\in
\Gamma_0$. In this case, we set $s(e_i)=e(e_i)=i$. The length of a
trivial path is defined to be zero. A nontrivial path $p$ is
called an \textit{oriented cycle} if $s(p)=e(p)$. Let us denote the set
of all paths by $\mathscr{P}$.

Let $K$ be a field and $\Gamma$ a quiver. Then the path algebra
$K\Gamma$ is the $K$-algebra generated by the paths in $\Gamma$
and the product of two paths $x=\alpha_n\cdots\alpha_1$ and
$y=\beta_t\cdots\beta_1$ is defined by
$$
xy=\left\{
\begin{array}{ll}
\alpha_n\cdots\alpha_1\beta_t\cdots\beta_1, & \mbox{$e(y)=s(x)$}\\
0, & \mbox{otherwise}.
\end{array}
\right.
$$
Clearly, $K\Gamma$ is an associative algebra with the identity
$1=\sum_{i\in \Gamma_0}e_i,$ where $e_i(i\in \Gamma_0)$ are
pairwise orthogonal primitive idempotents of $K\Gamma$.

A {\em relation} $\sigma$ on a quiver $\Gamma$ over a field $K$ is
a $K$-linear combination of paths $$\sigma=\sum_{i=1}^nk_ip_i,$$
where $k_i\in K$ and
$$e(p_1)=\cdots=e(p_n),\,\,\, s(p_1)=\cdots=s(p_n).$$ Moreover, the
number of arrows in each path is assumed to be at least 2. Let
$\rho$ be a set of relations on $\Gamma$ over $K$. The pair
$(\Gamma, \rho)$ is called a \textit{quiver} with relations over
$K$. Denote by $<\rho>$ the ideal of $K\Gamma$ generated by the
set of relations $\rho$. The $K$-algebra $K(\Gamma,
\rho)=K\Gamma/<\rho>$ is always associated with $(\Gamma, \rho)$.
For arbitrary element $x\in K\Gamma$, write by $\overline x$ the
corresponding element in $K(\Gamma, \rho)$. We often {\em write
$\overline x$ as $x$ if there is no confusion caused.}

\subsection{Generalized matrix algebras}

The definition of generalized matrix algebras is given by a Morita
context. Let $\mathcal{R}$ be a commutative ring with identity. A
Morita context consists of two $\mathcal{R}$-algebras $A$ and $B$,
two bimodules $_AM_B$ and $_BN_A$, and two bimodule homomorphisms
called the pairings $\Phi_{MN}: M\underset {B}{\otimes}
N\longrightarrow A$ and $\Psi_{NM}: N\underset {A}{\otimes}
M\longrightarrow B$ satisfying the following commutative diagrams:
$$
\xymatrix{ M \underset {B}{\otimes} N \underset{A}{\otimes} M
\ar[rr]^{\hspace{8pt}\Phi_{MN} \otimes I_M} \ar[dd]^{I_M \otimes
\Psi_{NM}} && A
\underset{A}{\otimes} M \ar[dd]^{\cong} \\  &&\\
M \underset{B}{\otimes} B \ar[rr]^{\hspace{10pt}\cong} && M }
{\rm and} \xymatrix{ N \underset {A}{\otimes} M
\underset{B}{\otimes} N \ar[rr]^{\hspace{8pt}\Psi_{NM}\otimes I_N}
\ar[dd]^{I_N\otimes \Phi_{MN}} && B
\underset{B}{\otimes} N \ar[dd]^{\cong}\\  &&\\
N \underset{A}{\otimes} A \ar[rr]^{\hspace{10pt}\cong} && N
\hspace{2pt}.}
$$
Let us write this Morita context as $(A, B, _AM_B, _BN_A,
\Phi_{MN}, \Psi_{NM})$. If $(A, B, _AM_B,$ $ _BN_A,$ $ \Phi_{MN},
\Psi_{NM})$ is a Morita context, then the set
$$
\left[
\begin{array}
[c]{cc}%
A & M\\
N & B\\
\end{array}
\right]=\left\{ \left[
\begin{array}
[c]{cc}%
a& m\\
n & b\\
\end{array}
\right] \vline a\in A, m\in M, n\in N, b\in B \right\}
$$
form an $\mathcal{R}$-algebra under matrix-like addition and
matrix-like multiplication. There is no constraint condition
concerning the bimodules $M$ and $N$. Of course, they can be equal
to zeros. Such an $\mathcal{R}$-algebra is called a
\textit{generalized matrix algebra} of order 2 and is usually
denoted by $\mathcal{G}=
\left[\smallmatrix A & M\\
N & B \endsmallmatrix \right]$ or $\mathcal{G}=(A, M, N, B)$.
The structure and properties of linear mappings on generalized matrix
algebras have been investigated in our systemic works \cite{LW1,
LW2, LWW, LX, XW1}.

Any unital $\mathcal{R}$-algebra $\mathcal {A}$ with nontrivial
idempotents is isomorphic to a generalized matrix algebra as
follows:
\begin{align*}
\mathcal{G} & = \left[
\begin{array}
[c]{cc}%
e\mathcal {A}e & e\mathcal {A}(1-e)\\
(1-e)\mathcal {A}e & (1-e)\mathcal {A}(1-e)\\
\end{array}
\right]\\ & =\left\{ \hspace{2pt} \left[
\begin{array}
[c]{cc}%
eae & ec(1-e)\\
(1-e)de & (1-e)b(1-e)\\
\end{array}
\right] \hspace{3pt} \vline \hspace{3pt} a, b, c, d\in \mathcal
{A}\hspace{3pt} \right\},
\end{align*}
where $e$ is a nontrivial idempotent in $\mathcal{A}$.

\bigskip\medskip

\section{Jordan Derivations of dual extension}

Lest us first recall the definition of dual extension algebras introduced in \cite{X1}. Let $\Lambda=K(\Gamma, \rho)$, where $\Gamma$ is a
finite quiver. Let $\Gamma^{\ast}$ to be a quiver whose vertex set
is $\Gamma_0$ and
$$\Gamma_1^{\ast}=\{\alpha^{\ast}: i\rightarrow j\mid \alpha: j\rightarrow i
\,\,\,\text{is an arrow in} \,\,\,\Gamma_1\}.$$ Let
$p=\alpha_n\cdots\alpha_1$ be a path in $\Gamma$. Write the path
$\alpha_1^{\ast}\cdots\alpha_n^{\ast}$ in $\Gamma^{\ast}$ by
$p^{\ast}$. Define $\mathscr{D}(\Lambda)$ to be the path algebra of
the quiver $(\Gamma_0, \Gamma_1\cup\Gamma_1^{\ast})$ with relations
$$\rho\,\,\cup\,\,\rho^{\ast}\,\,\cup\,\,\{\alpha\beta^{\ast}\mid\alpha,\beta\in\Gamma_1\}.$$
If $\Gamma$ has no oriented cycles, then $\mathscr{D}(\Lambda)$ is
called the {\em dual extension} of $\Lambda$. A more general
definition of dual extension algebras was given in \cite{X2}
to study global dimensions of dual extension algebras. We omit the details here because it
will not be involved in this paper. Clearly, if $|\Gamma_0|=1$,
then the algebra is trivial. Let us assume that $|\Gamma_0|\geq 2$
from now on. Then $\mathscr{D}(\Lambda)$ is isomorphic to a
generalized matrix
algebra $\mathcal{G}=\left[\smallmatrix A & M\\
N & B \endsmallmatrix \right]$. According to the construction of dual
extension, it is easy to verify that the pairings $\Phi_{MN}=0$ and
$\Psi_{NM}\neq 0$. If $M\neq 0$, then $N\neq 0$. Moreover, it is
helpful to point out that $M$ need not to be faithful as left
$A$-module or as right $B$-module. Some examples were given in
\cite{LW3}.

We always assume, without specially mentioned, that
every algebra and every bimodule considered is 2-torsion free. Let
us recall some indispensable descriptions about derivations and
Jordan derivations of generalized matrix algebras. For
details, we refer the reader to \cite{LW1} and \cite{LWW}.

\begin{lemma}\cite[Proposition 4.2]{LW1}\label{3.1}
An additive map $\Theta$ from $\mathcal{G}$ into itself is a
derivation if and only if it has the form
$$
\begin{aligned}
& \Theta\left(\left[
\begin{array}
[c]{cc}%
a & m\\
n & b\\
\end{array}
\right]\right)=& \left[
\begin{array}
[c]{cc}%
\delta_1(a)-mn_0-m_0n & am_0-m_0b+\tau_2(m)\\
n_0a-bn_0+\nu_3(n) & n_0m+nm_0+\mu_4(b)\\
\end{array}
\right] ,(\bigstar 1) \\
& \forall \left[
\begin{array}
[c]{cc}%
a & m\\
n & b\\
\end{array}
\right]\in \mathcal{G},
\end{aligned}
$$
where $m_0\in M, n_0\in N$ and
$$
\begin{aligned} \delta_1:& A \longrightarrow A, &
 \tau_2: & M\longrightarrow M, &  \nu_3: & N\longrightarrow N , &
\mu_4: & B\longrightarrow B
\end{aligned}
$$
are all $\mathcal{R}$-linear mappings satisfying the following
conditions:
\begin{enumerate}
\item[(1)] $\delta_1$ is a derivation of $A$ with
$\delta_1(mn)=\tau_2(m)n+m\nu_3(n);$

\item[(2)] $\mu_4$ is a derivation of $B$ with
$\mu_4(nm)=n\tau_2(m)+\nu_3(n)m;$

\item[(3)] $\tau_2(am)=a\tau_{2}(m)+\delta_1(a)m$ and
$\tau_2(mb)=\tau_2(m)b+m\mu_4(b);$

\item[(4)] $\nu_3(na)=\nu_3(n)a+n\delta_1(a)$ and
$\nu_3(bn)=b\nu_3(n)+\mu_4(b)n.$
\end{enumerate}
\end{lemma}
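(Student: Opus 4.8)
The plan is to prove the two implications of the equivalence separately; the real content lies in the forward implication (a derivation necessarily has the displayed form $(\bigstar1)$ with (1)--(4)), while the converse is a finite mechanical check. For the converse, observe that $\Theta$ is additive and that both $(X,Y)\mapsto\Theta(XY)$ and $(X,Y)\mapsto\Theta(X)Y+X\Theta(Y)$ are biadditive, so it suffices to verify $\Theta(XY)=\Theta(X)Y+X\Theta(Y)$ when $X$ and $Y$ each range over the four Peirce types $\bigl[\begin{smallmatrix}a&0\\0&0\end{smallmatrix}\bigr],\ \bigl[\begin{smallmatrix}0&m\\0&0\end{smallmatrix}\bigr],\ \bigl[\begin{smallmatrix}0&0\\n&0\end{smallmatrix}\bigr],\ \bigl[\begin{smallmatrix}0&0\\0&b\end{smallmatrix}\bigr]$ --- sixteen cases. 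In each one I would substitute $(\bigstar1)$ and multiply out: the products landing in a single block are covered exactly by conditions (1)--(4), and the products that vanish are covered by associativity in $\mathcal G$ (e.g.\ $(am_0)n=a(m_0n)$). These I would not write out.

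For the forward implication, let $\Theta$ be a derivation and put $e=\bigl[\begin{smallmatrix}1_A&0\\0&0\end{smallmatrix}\bigr]$, so $1-e=\bigl[\begin{smallmatrix}0&0\\0&1_B\end{smallmatrix}\bigr]$. First I would apply $\Theta$ to $e^2=e$, getting $\Theta(e)=\Theta(e)e+e\Theta(e)$; comparing the four blocks forces the two diagonal blocks of $\Theta(e)$ to vanish, so $\Theta(e)=\bigl[\begin{smallmatrix}0&m_0\\n_0&0\end{smallmatrix}\bigr]$ for some $m_0\in M$, $n_0\in N$ (no torsion hypothesis is needed here: $2x=x$ already gives $x=0$). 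Applying $\Theta$ to $1\cdot1=1$ gives $\Theta(1)=0$, hence $\Theta(1-e)=-\Theta(e)$.

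Next I would determine $\Theta$ on each Peirce component. For $a\in A$, write $\widehat a=\bigl[\begin{smallmatrix}a&0\\0&0\end{smallmatrix}\bigr]$ and apply $\Theta$ to the identities $e\widehat a=\widehat a=\widehat a e$; reading off blocks (and using the formula for $\Theta(e)$) pins down $\Theta(\widehat a)=\bigl[\begin{smallmatrix}\delta_1(a)&am_0\\ n_0a&0\end{smallmatrix}\bigr]$ for a uniquely determined additive $\delta_1\colon A\to A$. Symmetrically, using $(1-e)$ in place of $e$ and $\Theta(1-e)=-\Theta(e)$, one gets $\Theta\bigl[\begin{smallmatrix}0&0\\0&b\end{smallmatrix}\bigr]=\bigl[\begin{smallmatrix}0&-m_0b\\-bn_0&\mu_4(b)\end{smallmatrix}\bigr]$, and using the mixed relations $e\,\widehat m=\widehat m=\widehat m(1-e)$, $(1-e)\widehat n=\widehat n=\widehat n e$ one gets $\Theta\bigl[\begin{smallmatrix}0&m\\0&0\end{smallmatrix}\bigr]=\bigl[\begin{smallmatrix}-mn_0&\tau_2(m)\\0&n_0m\end{smallmatrix}\bigr]$ and $\Theta\bigl[\begin{smallmatrix}0&0\\n&0\end{smallmatrix}\bigr]=\bigl[\begin{smallmatrix}-m_0n&0\\ \nu_3(n)&nm_0\end{smallmatrix}\bigr]$, where $\tau_2,\nu_3$ are additive. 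Summing these four expressions and invoking additivity of $\Theta$ produces precisely the form $(\bigstar1)$, with $m_0,n_0,\delta_1,\tau_2,\nu_3,\mu_4$ all determined by $\Theta$.

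Finally, to extract (1)--(4), I would apply the derivation rule to the eight ``structure products'' and read off a single block each time: from $\widehat a\widehat{a'}$ and from $\widehat m\widehat n$, in the $(1,1)$-block, that $\delta_1$ is a derivation of $A$ and that $\delta_1(mn)=\tau_2(m)n+m\nu_3(n)$; from $\widehat b\widehat{b'}$ and $\widehat n\widehat m$, in the $(2,2)$-block, condition (2); from $\widehat a\widehat m$ and $\widehat m\widehat b$, in the $(1,2)$-block, condition (3); from $\widehat n\widehat a$ and $\widehat b\widehat n$, in the $(2,1)$-block, condition (4). The remaining Peirce products vanish and, as in the converse, impose nothing beyond associativity. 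This is a Peirce-decomposition computation with no genuinely hard step; the only points needing care are (i) that the lemma carries \emph{no} nondegeneracy or faithfulness hypothesis on $M$ and $N$ (they may even be zero), so one must stay with explicit block bookkeeping and never divide out an element of $M$ or $N$, and (ii) keeping the signs straight when passing through $1-e$. Verifying that the eight identities in (1)--(4) are \emph{exactly} what the converse consumes is what makes the equivalence sharp.
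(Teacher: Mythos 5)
Your proposal is correct. Note that the paper gives no proof of this lemma at all --- it is quoted verbatim from \cite[Proposition 4.2]{LW1} --- and your Peirce-decomposition argument (determining $\Theta(e)$ from $e^2=e$, pinning down $\Theta$ on each corner via $e\widehat{a}=\widehat{a}=\widehat{a}e$ and its analogues, then reading conditions (1)--(4) off the block products) is exactly the standard proof of that cited result, with the block computations checking out as you describe.
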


\begin{lemma}\cite[Proposition 4.2]{LWW}\label{3.2}
An additive map $\Theta$ from $\mathcal{G}$ into itself is a
Jordan derivation if and only if it is of the form
$$
\begin{aligned}
& \Theta\left(\left[
\begin{array}
[c]{cc}%
a & m\\
n & b\\
\end{array}
\right]\right)\\ =& \left[
\begin{array}
[c]{cc}%
\delta_1(a)-mn_0-m_0n & am_0-m_0b+\tau_2(m)+\tau_3(n)\\
n_0a-bn_0+\nu_2(m)+\nu_3(n) & n_0m+nm_0+\mu_4(b)\\
\end{array}
\right] ,(\bigstar 2)\\
& \forall \left[
\begin{array}
[c]{cc}%
a & m\\
n & b\\
\end{array}
\right]\in \mathcal{G},
\end{aligned}
$$
where $m_0\in M, n_0\in N$ and
$$
\begin{aligned} \delta_1:& A \longrightarrow A, &  \tau_2:
& M\longrightarrow M, & \tau_3: & N\longrightarrow M,\\
\nu_2: & M\longrightarrow N, & \nu_3: & N\longrightarrow N, & \mu_4:
& B\longrightarrow B
\end{aligned}
$$
are all $\mathcal {R}$-linear mappings satisfying the following
conditions:
\begin{enumerate}
\item[(1)] $\delta_1$ is a Jordan derivation on $A$ and
$\delta_1(mn)=\tau_2(m)n+m\nu_3(n);$

\item [(2)] $\mu_4$ is a Jordan derivation on $B$ and
$\mu_4(nm)=n\tau_2(m)+\nu_3(n)m;$

\item[(3)] $\tau_2(am)=a\tau_2(m)+\delta_1(a)m$ and
$\tau_2(mb)=\tau_2(m)b+m\mu_4(b);$

\item[(4)] $\nu_3(bn)=b\nu_3(n)+\mu_4(b)n$ and
$\nu_3(na)=\nu_3(n)a+n\delta_1(a);$

\item[(5)] $\tau_3(na)=a\tau_3(n)$, $\tau_3(bn)=\tau_3(n)b$, $n\tau_3(n)=0$,
$\tau_3(n)n=0;$

\item[(6)] $\nu_2(am)=\nu_2(m)a$, $\nu_2(mb)=b\nu_2(m)$, $m\nu_2(m)=0$,
$\nu_2(m)m=0.$
\end{enumerate}
\end{lemma}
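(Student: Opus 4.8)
The statement is a biconditional, so the plan is to prove the two implications separately: \emph{necessity}, that every Jordan derivation has the displayed form $(\bigstar 2)$ and satisfies conditions (1)--(6), and \emph{sufficiency}, that every $\mathcal{R}$-linear map of that form satisfying (1)--(6) is a Jordan derivation. Throughout I would write $e_{11}=\left[\smallmatrix 1_A & 0\\ 0 & 0\endsmallmatrix\right]$ and $e_{22}=\left[\smallmatrix 0 & 0\\ 0 & 1_B\endsmallmatrix\right]$ for the diagonal idempotents and record the four corner injections $a\mapsto\left[\smallmatrix a & 0\\ 0 & 0\endsmallmatrix\right]$, $m\mapsto\left[\smallmatrix 0 & m\\ 0 & 0\endsmallmatrix\right]$, $n\mapsto\left[\smallmatrix 0 & 0\\ n & 0\endsmallmatrix\right]$, $b\mapsto\left[\smallmatrix 0 & 0\\ 0 & b\endsmallmatrix\right]$. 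Since $\Theta$ is additive and $\circ$ is bilinear, it suffices to understand $\Theta$ on each of these four families and to test the Jordan identity on pairs drawn from them.

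\textbf{Necessity.} First I would pin down $\Theta$ on idempotents: from $e_{11}^2=e_{11}$ and $1^2=1$ the Jordan identity forces $\Theta(1)=0$ and $\Theta(e_{11})=\left[\smallmatrix 0 & m_0\\ n_0 & 0\endsmallmatrix\right]=-\Theta(e_{22})$ for some $m_0\in M$, $n_0\in N$, the diagonal entries vanishing because the algebra is $2$-torsion free. Next, computing $\Theta(e_{11}\circ X)$ for $X$ a single corner element and comparing both sides isolates the components of $\Theta$ on each family: Jordan-multiplying an $A$-corner by $e_{11}$ kills its $(2,2)$ part and recovers $\left[\smallmatrix \delta_1(a) & am_0\\ n_0 a & 0\endsmallmatrix\right]$, and analogously for the other three corners, which together assemble into exactly the form $(\bigstar 2)$ with $\mathcal{R}$-linear maps $\delta_1,\tau_2,\tau_3,\nu_2,\nu_3,\mu_4$ defined as the surviving components. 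Conditions (1)--(6) then come from testing the Jordan identity on the remaining pairs of corners: $\left[\smallmatrix a & 0\\ 0 & 0\endsmallmatrix\right]\circ\left[\smallmatrix a' & 0\\ 0 & 0\endsmallmatrix\right]$ (resp.\ the $B$-corners) shows $\delta_1$ (resp.\ $\mu_4$) is a Jordan derivation; the mixed product $\left[\smallmatrix 0 & m\\ 0 & 0\endsmallmatrix\right]\circ\left[\smallmatrix 0 & 0\\ n & 0\endsmallmatrix\right]$ yields the pairing identities $\delta_1(mn)=\tau_2(m)n+m\nu_3(n)$ and $\mu_4(nm)=n\tau_2(m)+\nu_3(n)m$; products of an $A$- or $B$-corner with an $M$- or $N$-corner give the module-action relations (3), (4) and the first halves of (5), (6); and finally squaring an off-diagonal element, using $\left[\smallmatrix 0 & m\\ 0 & 0\endsmallmatrix\right]^2=0$ and $2$-torsion freeness to get $\Theta\!\left(\left[\smallmatrix 0 & m\\ 0 & 0\endsmallmatrix\right]\right)\circ\left[\smallmatrix 0 & m\\ 0 & 0\endsmallmatrix\right]=0$, exposes on the diagonal the relations $m\nu_2(m)=0=\nu_2(m)m$, and likewise $n\tau_3(n)=0=\tau_3(n)n$.

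\textbf{Sufficiency.} Conversely, assume $\Theta$ is given by $(\bigstar 2)$ with (1)--(6). Since the algebra is $2$-torsion free it suffices to verify $\Theta(X^2)=\Theta(X)X+X\Theta(X)$ for a general $X=\left[\smallmatrix a & m\\ n & b\endsmallmatrix\right]$; equivalently, by additivity and bilinearity of $\circ$, to check $\Theta(X\circ Y)=\Theta(X)\circ Y+X\circ\Theta(Y)$ on each pair of corner types. Each such verification expands both sides through the explicit form and collapses onto one of the conditions (1)--(6), the associativity of the Morita pairings (for instance $m(n_0a)=(mn_0)a$ and $(am_0)b=a(m_0b)$) being exactly what makes the cross terms in $m_0,n_0$ cancel. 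This direction is mechanical once the finite case list is fixed.

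\textbf{Main obstacle.} The delicate point is the necessity direction's treatment of the extra maps $\tau_3\colon N\to M$ and $\nu_2\colon M\to N$, which have no analogue for ordinary derivations (Lemma \ref{3.1}): one must check that the single-corner computations leave their off-diagonal components genuinely free and that the only constraints on them are the nilpotence-type relations in (5), (6) obtained from squaring off-diagonal elements. Keeping the Morita triple-product bookkeeping straight, and correctly separating $\tau_2,\nu_3$ (which interact with $\delta_1,\mu_4$ through the pairings) from $\tau_3,\nu_2$ (which do not), is where the care lies; everything else is a direct if lengthy calculation.
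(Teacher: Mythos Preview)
The paper does not supply its own proof of this lemma; it is quoted verbatim from \cite[Proposition~4.2]{LWW} and used as a black box. Consequently there is no in-paper argument to compare against.

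That said, your outline is correct and is exactly the standard Peirce-decomposition argument one expects in the cited source: compute $\Theta$ on the idempotents $e_{11},e_{22}$ using $2$-torsion freeness to obtain the off-diagonal element $\left[\smallmatrix 0 & m_0\\ n_0 & 0\endsmallmatrix\right]$, then feed each corner type into the Jordan identity against $e_{11}$ (or $e_{22}$) to read off the component maps, and finally test the identity on all ordered pairs of corner types to extract conditions (1)--(6). Your identification of the one subtle point---that $\tau_3$ and $\nu_2$ arise as the ``unexpected'' off-diagonal components and are constrained only by the anti-symmetric relations in (5) and (6) coming from $\Theta(X)\circ X=0$ for $X$ a strict off-diagonal corner---is accurate, and the sufficiency direction is indeed a mechanical (if tedious) verification. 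Nothing is missing.
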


Let $\Lambda=K(\Gamma, \rho)$, where $\Gamma$ is a finite
connected quiver without oriented cycles, and let
$\mathscr{D}(\Lambda)$ be the dual extension algebra. Assume that
$i\in \Gamma_0$ is a source
and $\mathscr{D}(\Lambda)\simeq \mathcal{G}=\left[\smallmatrix A & M\\
N & B \endsmallmatrix \right]$, where $B\simeq
e_i\mathscr{D}(\Lambda)e_i$. Thus an arbitrary Jordan derivation on
$\mathscr{D}(\Lambda)$ can be characterized by the methods of
generalized matrix algebras as follows.

\begin{lemma}\label{3.3}
Let $\Theta$ be a Jordan derivation  of $\mathscr{D}(\Lambda)$.
Then $\Theta$ is of the form
$$
\begin{aligned}
& \Theta\left(\left[
\begin{array}
[c]{cc}%
a & m\\
n & b\\
\end{array}
\right]\right) =& \left[
\begin{array}
[c]{cc}%
\delta_1(a) & am_0-m_0b+\tau_2(m)\\
n_0a-bn_0+\nu_3(n) & n_0m+nm_0+\mu_4(b)\\
\end{array}
\right] , (\bigstar 3)\\
& \forall \left[
\begin{array}
[c]{cc}%
a & m\\
n & b\\
\end{array}
\right]\in \mathcal{G},
\end{aligned}
$$
where $m_0\in M, n_0\in N$ and
$$
\begin{aligned} \delta_1:& A \longrightarrow A, &  \tau_2:
& M\longrightarrow M, & \nu_3: & N\longrightarrow N, & \mu_4: &
B\longrightarrow B
\end{aligned}
$$
are all $\mathcal {R}$-linear mappings satisfying the following
conditions:
\begin{enumerate}
\item[(1)] $\delta_1$ is a Jordan derivation on $A$;

\item [(2)] $\mu_4$ is a derivation on $B$ and
$\mu_4(nm)=n\tau_2(m)+\nu_3(n)m;$

\item[(3)] $\tau_2(am)=a\tau_2(m)+\delta_1(a)m$ and
$\tau_2(mb)=\tau_2(m)b+m\mu_4(b);$

\item[(4)] $\nu_3(bn)=b\nu_3(n)+\mu_4(b)n$ and
$\nu_3(na)=\nu_3(n)a+n\delta_1(a).$
\end{enumerate}
\end{lemma}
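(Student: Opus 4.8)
The plan is to read off the form $(\bigstar 3)$ by specializing Lemma~\ref{3.2} to the generalized matrix algebra $\mathcal{G}$ attached to $\mathscr{D}(\Lambda)$, and then to extract four extra facts: that the $(1,1)$-entry simplifies, that $\tau_3=0$, that $\nu_2=0$, and that $\mu_4$ is a genuine derivation. By Lemma~\ref{3.2}, any Jordan derivation $\Theta$ of $\mathscr{D}(\Lambda)=\mathcal{G}$ has the form $(\bigstar 2)$, with data $\delta_1,\tau_2,\tau_3,\nu_2,\nu_3,\mu_4,m_0,n_0$ satisfying conditions (1)--(6) there. Since $i$ is a source of $\Gamma$, the only arrows of $\mathscr{D}(\Lambda)$ leaving $i$ are the original arrows $\alpha$ with $s(\alpha)=i$, and the only arrows entering $i$ are the dual arrows $\alpha^{\ast}$; from the defining relations $\alpha\beta^{\ast}$ one then gets $MN=0$ (this is the already-noted $\Phi_{MN}=0$), so $mn_0=m_0n=0$ and the $(1,1)$-entry of $(\bigstar 2)$ becomes $\delta_1(a)$. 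Conditions (1), (3), (4) of the statement are exactly conditions (1), (3), (4) of Lemma~\ref{3.2} (the extra identity $\delta_1(mn)=\tau_2(m)n+m\nu_3(n)$ there holding trivially once $MN=0$), and $\mu_4(nm)=n\tau_2(m)+\nu_3(n)m$ is part of Lemma~\ref{3.2}(2). So it remains to prove: (i) $\mu_4$ is a derivation of $B$; (ii) $\tau_3=0$ and $\nu_2=0$.

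For (i) I would first identify the algebra $B=e_i\mathscr{D}(\Lambda)e_i$. Because $i$ is a source, a nonzero path $i\to i$ cannot revisit $i$ internally (that would force a subpath $\alpha\beta^{\ast}$, which is a defining relation, hence zero), so every nonzero path $i\to i$ of positive length factors as $(\text{path }i\to j)(\text{path }j\to i)$ with $j\neq i$; hence the span of these paths is precisely $NM$. Since $B/NM\cong K$ and $(NM)^2=N(MN)M=0$, this span is $\operatorname{rad}(B)$ and squares to zero, so $B=Ke_i\oplus\operatorname{rad}(B)$ is a square-zero extension of $K$. A short computation shows that, in characteristic $\neq 2$, every Jordan derivation $\mu$ of such an algebra is a derivation: $\mu(1)=0$; the identity $\mu(v^2)=\mu(v)v+v\mu(v)$ applied to $v\in\operatorname{rad}(B)$ forces $\mu(\operatorname{rad}(B))\subseteq\operatorname{rad}(B)$; and then $\mu(bb')-\mu(b)b'-b\mu(b')$ always lands in $\operatorname{rad}(B)^2=0$. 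Applying this to $\mu=\mu_4$ (a Jordan derivation of $B$ by Lemma~\ref{3.2}(2)) settles (i).

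For (ii) I would treat $\tau_3$, the argument for $\nu_2$ being symmetric. Using again that $i$ is a source, every element of $N=e_i\mathscr{D}(\Lambda)(1-e_i)$ is a right $A$-linear combination of the dual arrows $\alpha^{\ast}$, and $\operatorname{rad}(B)$ annihilates $N$ on the left; the latter makes the conditions $\tau_3(bn)=\tau_3(n)b$ of Lemma~\ref{3.2}(5) automatic, while $\tau_3(na)=a\tau_3(n)$ shows $\tau_3$ is determined by the elements $\tau_3(\alpha^{\ast})\in M$, so it suffices to prove $\tau_3(\alpha^{\ast})=0$. Substituting $n=\alpha^{\ast}a$ into $n\tau_3(n)=0$ and polarizing in $a$ gives $\alpha^{\ast}(ab+ba)\tau_3(\alpha^{\ast})=0$ for all $a,b\in A$; taking $b=1$ and using that $B$ is $2$-torsion free yields $\alpha^{\ast}a\,\tau_3(\alpha^{\ast})=0$ for every $a\in A$. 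Finally one writes $\tau_3(\alpha^{\ast})$ in terms of the basis paths of $\mathscr{D}(\Lambda)$ starting at $i$ and argues, using the explicit path multiplication, that left multiplication by $\alpha^{\ast}$ together with the right $A$-action detects any such nonzero element inside $B$; hence $\tau_3(\alpha^{\ast})=0$. I expect this last, path-combinatorial step to be the real obstacle. Once $\tau_3=0$ and $\nu_2=0$ are established, $\Theta$ has the form $(\bigstar 3)$ and satisfies (1)--(4), which proves the lemma.
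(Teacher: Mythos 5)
Your overall route coincides with the paper's: specialize the form $(\bigstar 2)$ of Lemma \ref{3.2}, use $\Phi_{MN}=0$ to clean up the $(1,1)$-entry, deduce that $\mu_4$ is a derivation from the structure of $B=e_i\mathscr{D}(\Lambda)e_i$ (your observation that $B=Ke_i\oplus NM$ with $(NM)^2=0$ is a harmless variant of the paper's one-line appeal to the commutativity of $B$), and reduce $\tau_3=0$, $\nu_2=0$ to the values of $\tau_3,\nu_2$ on arrows via $\tau_3(na)=a\tau_3(n)$ --- exactly the paper's reduction, which writes every path of $N$ as $\alpha p'$ with $\alpha$ an arrow ending at $i$.

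The problem is that you stop at the decisive point. Having derived $\alpha^{\ast}a\,\tau_3(\alpha^{\ast})=0$ for all $a\in A$, you say one should ``argue, using the explicit path multiplication, that left multiplication by $\alpha^{\ast}$ together with the right $A$-action detects any such nonzero element,'' and you yourself label this ``the real obstacle.'' That step is the entire content of the lemma; everything preceding it is formal bookkeeping inherited from Lemma \ref{3.2}, so deferring it means the statement $\tau_3=0$ (and symmetrically $\nu_2=0$) is not actually established. The paper closes this point as follows: from condition (5) of Lemma \ref{3.2} with $a=e_j$, $j=s(\alpha^{\ast})$, one gets $\tau_3(\alpha^{\ast})=e_j\tau_3(\alpha^{\ast})$, so $\tau_3(\alpha^{\ast})$ is supported on paths from $i$ to $j$; the paper then asserts that left multiplication by the dual arrow $\alpha^{\ast}$ does not annihilate a nonzero such element (concatenation $\alpha^{\ast}q$ creates no forbidden subword of the form ``original arrow after dual arrow,'' so the products stay among the basis paths of $\mathscr{D}(\Lambda)$ from $i$ to $i$), which contradicts $n\tau_3(n)=0$. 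Note this injectivity is itself only asserted in one line in the paper and is not completely obvious when $\rho\neq\emptyset$ (a relation in $\rho$ could in principle kill a concatenation $(p\beta)^{\ast}q$ without killing $p^{\ast}q$), so you were right to identify it as the crux --- but identifying the crux is not the same as resolving it, and without this step your proof does not go through.
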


\begin{proof}
Let $\Theta$ be a Jordan derivation of $\mathscr{D}(\Lambda)$ with
the form $(\bigstar 2)$. We first prove that $\tau_3=0$,
$\nu_2=0$. Let $\alpha\in N$ be an arbitrary arrow. Then
$e(\alpha)=i$. Assume that $s(\alpha)=j$, where $j\in \Gamma_0$.
In view of condition (5) of Lemma \ref{3.2} we know that
$$
\tau_3(\alpha)=\tau_3(\alpha
e_j)=e_j\tau_3(\alpha).
$$
This implies that if $\tau_3(\alpha)\neq
0$, then $\alpha\tau_3(\alpha)\neq 0$. However,
$\alpha\tau_3(\alpha)\neq 0$ is impossible by condition (5) of
Lemma \ref{3.2}. Thus $\tau_3(\alpha)=0$ for all $\alpha\in N$.
Note that all path $p\in N$ with length more than $1$ is of the
form $\alpha p'$, where $\alpha$ is an arrow ending at $i$. Then
$\tau_3(p)=\tau_3(\alpha p')=p'\tau_3(\alpha)=0$. This completes
the proof of $\tau_3=0$. It is proved similarly that $\nu_2=0$.
Furthermore, we have from $B$ is commutative that every Jordan
derivation of $B$ is a derivation. Finally, the fact $\Phi_{MN}=0$ leads to
$mn_0=m_0n=0$ for all $m\in M$ and $n\in N$.
\end{proof}

Now we are in a position to describe Jordan derivations of a dual
extension algebra.

\begin{theorem}\label{3.4}
Let $\Gamma$ be a finite connected quiver without oriented cycles
and $\Lambda=K(\Gamma, \rho)$. Let $\mathscr{D}(\Lambda)$ be the
dual extension algebra of $\Lambda$. Then each Jordan derivation
of $\mathscr{D}(\Lambda)$ is a derivation.
\end{theorem}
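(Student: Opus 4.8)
The plan is to reduce everything to the structural description already in hand. By Lemma~\ref{3.3}, any Jordan derivation $\Theta$ of $\mathscr{D}(\Lambda)$ has the form $(\bigstar 3)$ with data $\delta_1,\tau_2,\nu_3,\mu_4$ subject to conditions (1)--(4). Comparing $(\bigstar 3)$ with the shape $(\bigstar 1)$ of a genuine derivation from Lemma~\ref{3.1}, and noting that in our situation $\Phi_{MN}=0$ forces the terms $mn_0$ and $m_0n$ to vanish anyway, I see that the only gap between the two descriptions is that in Lemma~\ref{3.1} the map $\delta_1$ is required to be a \emph{derivation} of $A$ (together with the compatibility $\delta_1(mn)=\tau_2(m)n+m\nu_3(n)$), whereas Lemma~\ref{3.3} only gives that $\delta_1$ is a \emph{Jordan} derivation of $A$. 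So the whole theorem comes down to two things: (i) showing that $\delta_1$ is in fact an (associative) derivation of $A$, and (ii) verifying the remaining compatibility condition $\delta_1(mn)=\tau_2(m)n+m\nu_3(n)$. Once these are established, $\Theta$ satisfies all the conditions of Lemma~\ref{3.1} and is therefore a derivation.

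For step (i), the key observation is that $A\simeq (1-e_i)\mathscr{D}(\Lambda)(1-e_i)$ is itself (isomorphic to) a dual extension algebra — or at worst a product of such — on the full subquiver obtained by deleting the source $i$, which again has no oriented cycles. Thus I would argue by induction on $|\Gamma_0|$. The base case $|\Gamma_0|=2$ must be handled directly: there $A$ corresponds to a single vertex, so $A=K$ (or a product of copies of $K$), which is commutative, hence every Jordan derivation of $A$ is automatically a derivation. For the inductive step, $A$ is a dual extension algebra on a smaller quiver; the restriction of a Jordan derivation of the big algebra to the corner $A$ is a Jordan derivation of $A$ (this is part of Lemma~\ref{3.2}(1)), so by the induction hypothesis it is a derivation of $A$. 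Some care is needed when the subquiver is disconnected, but a Jordan derivation of a finite product of algebras decomposes as a product of Jordan derivations (using the central idempotents), so the reduction to the connected case is harmless.

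For step (ii), I would exploit the Jordan-derivation identity on mixed products once more. Condition (1) of Lemma~\ref{3.3} does not list $\delta_1(mn)=\tau_2(m)n+m\nu_3(n)$, but this relation actually appears in Lemma~\ref{3.2}(1) for any Jordan derivation of $\mathcal{G}$, and the proof of Lemma~\ref{3.3} only discards the $\tau_3,\nu_2$ pieces and the $mn_0,m_0n$ pieces — it does not touch this identity. So in fact $\delta_1(mn)=\tau_2(m)n+m\nu_3(n)$ already holds for our $\Theta$; I would simply record this. (If one prefers a self-contained derivation, apply $\Theta$ to $\left[\smallmatrix 0 & m\\ 0 & 0\endsmallmatrix\right]\circ\left[\smallmatrix 0 & 0\\ n & 0\endsmallmatrix\right]$ and read off the $(1,1)$-entry, using $(\bigstar 3)$ and conditions (3)--(4) to identify the cross terms.) With (i) and (ii) in place, every hypothesis of Lemma~\ref{3.1} is met, so $\Theta$ is a derivation.

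The main obstacle, as I see it, is the inductive bookkeeping in step (i): making precise that the corner $A=(1-e_i)\mathscr{D}(\Lambda)(1-e_i)$ really is the dual extension of the path algebra of the full subquiver $\Gamma\setminus\{i\}$ with the induced relations (this uses that $i$ is a source, so no arrows or relations into $i$ interfere with the corner), and correctly handling the case where this subquiver falls apart into several connected components. Everything else is a matter of matching $(\bigstar 3)$ against $(\bigstar 1)$ and quoting the conditions already extracted in Lemmas~\ref{3.2} and~\ref{3.3}; no new computation of substance is required.
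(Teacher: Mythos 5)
Your proposal is correct and follows essentially the same route as the paper: reduce via Lemma~\ref{3.3} to the corner $A\simeq\mathscr{D}(\Lambda')$ obtained by deleting a source, then induct on $|\Gamma_0|$ down to the base case $K$, matching $(\bigstar 3)$ against $(\bigstar 1)$ at each stage. Your added care about the inherited identity $\delta_1(mn)=\tau_2(m)n+m\nu_3(n)$ and about possible disconnectedness of the subquiver only makes explicit points the paper leaves implicit.
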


\begin{proof}
If the algebra $\mathscr{D}(\Lambda)$ is trivial, then the theorem
clearly holds. Suppose that $\Gamma_0\geq 2$ and that $i\in
\Gamma_0$ is a source. Let $\Theta$ be a Jordan derivation on
$\mathscr{D}(\Lambda)$. Let us denote by $(\Gamma', \rho')$ the
quiver obtained by removing the vertex $i$ and the relations
starting at $i$ and write $\Lambda'=K(\Gamma', \rho')$. It follows
from Lemma \ref{3.3} that each Jordan derivation on
$\mathscr{D}(\Lambda)$ is a derivation if each Jordan derivation on
$\mathscr{D}(\Lambda')$ is a derivation. Thus it is sufficient to
determine whether every Jordan derivation on $\mathscr{D}(\Lambda')$
is a derivation. We continuously repeat this process and ultimately
arrive at the algebra $K$ after finite times, since $\Gamma_0$ is a
finite set. Clearly, every Jordan derivation on $K$ is a derivation.
This completes the proof.
\end{proof}

\begin{remark}
In \cite{LWW}, the current authors and L. W. Wyk proved that for a
generalized matrix algebra $\mathcal{G}=(A, M, N, B)$ with $M$
being faithful as left $A$-module and as right $B$-module, if the
pairings $\Phi_{MN}=0$ and $\Psi_{NM}=0$, then each Jordan
derivation of $\mathcal{G}$ is the sum of a derivation and an
anti-derivation. Theorem \ref{3.4} implies that neither the
faithful condition nor the pairings being both zero is necessary.
Benkovi\v{c} and \v{S}irovnik \cite{B3} introduced
the so-called singular Jordan derivations and showed that under
certain condition, each Jordan derivation of a generalized matrix
algebra is the sum of a derivation and a singular Jordan
derivation. Theorem \ref{3.4} also implies that Bencovi\v{c}'s
condition is not necessary.
\end{remark}

At the end of this section, let us characterize Jordan generalized
derivations and generalized Jordan derivations of dual extension
algebras. Let $\mathcal{R}$ be a commutative ring with identity,
$\mathcal{A}$ a unital algebra over $\mathcal{R}$ and
$\mathcal{M}$ an $(\mathcal{A}, \mathcal{A})$-bimodule. Recall that a
linear mapping $f\colon \mathcal{A}\rightarrow \mathcal{A}$ is called a
\emph{Jordan generalized derivation} if there exists a linear mapping
$d\colon \mathcal{A}\rightarrow\mathcal{A}$ such that
$$f(x\circ y)=f(x)\circ y+x\circ d(y)$$
for all $x,y\in\mathcal{A}$,where $d$ is called an associated
linear mapping of $f$. A linear mapping $f\colon \mathcal{A}\longrightarrow
\mathcal{A}$ is called a \emph{generalized Jordan derivation} if
there exists a linear mapping $d\colon \mathcal{A}\longrightarrow
\mathcal{A}$ such that $f(x\circ y)=f(x)y+f(y)x+xd(y)+yd(x)$ for
all $x, y\in \mathcal{A}$. A linear mapping $f
:\mathcal{A}\longrightarrow \mathcal{A}$ is called a
\emph{generalized derivation} if there exists a linear mapping $d\colon \mathcal{A}\longrightarrow \mathcal{A}$ such that
$f(xy)=f(x)y+xd(y)$ for all $x, y\in \mathcal{A}$.

For generalized Jordan derivations on dual extension algebras, the
following result is a simple corollary of \cite[Lemma 4.1]{B2} and
Theorem \ref{3.4}.

\begin{corollary}\label{3.6}
Every generalized Jordan derivation on a dual extension algebra
$\mathscr{D}(\Lambda)$ is a generalized derivation.
\end{corollary}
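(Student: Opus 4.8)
The plan is to deduce this formally from \cite[Lemma 4.1]{B2} together with Theorem \ref{3.4}. If $|\Gamma_0|=1$ then $\mathscr{D}(\Lambda)\cong K$ and the statement is trivial, and a disconnected quiver reduces to the connected case componentwise (the dual extension of a disjoint union is the corresponding direct product of dual extensions, over which both generalized Jordan derivations and generalized derivations decompose diagonally, since the component identities are central idempotents). So I may assume $\mathscr{D}(\Lambda)$ is the dual extension of a finite connected acyclic quiver with at least two vertices; in particular it is a unital, $2$-torsion free $K$-algebra, which is exactly the setting in which \cite[Lemma 4.1]{B2} applies.

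Next, let $f$ be a generalized Jordan derivation of $\mathscr{D}(\Lambda)$ with associated linear map $d$, so that $f(x\circ y)=f(x)y+f(y)x+xd(y)+yd(x)$ for all $x,y$. I would first pin down the shape of $f$. Specialising $x=1$ gives $f(y)=f(1)y+d(y)+yd(1)$; substituting this expression back into the defining identity and cancelling yields, after a short computation, $d(x\circ y)=d(x)\circ y+x\circ d(y)+x[d(1),y]+y[d(1),x]$, and then specialising $x=1$ here forces $2\,d(1)\,y=0$ for all $y$, hence $d(1)=0$ by $2$-torsion freeness. Consequently $d(x\circ y)=d(x)\circ y+x\circ d(y)$, i.e.\ $d$ is a Jordan derivation of $\mathscr{D}(\Lambda)$, and $f=\mathrm{L}_c+d$, where $\mathrm{L}_c$ denotes left multiplication by the fixed element $c=f(1)$. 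This is precisely the content of \cite[Lemma 4.1]{B2}, which may be invoked directly in place of the computation above.

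Finally, Theorem \ref{3.4} applies verbatim to the Jordan derivation $d$: every Jordan derivation of a dual extension algebra is a derivation, so $d(xy)=d(x)y+xd(y)$ for all $x,y$. Therefore
$$f(xy)=c(xy)+d(xy)=\bigl(cx+d(x)\bigr)y+xd(y)=f(x)y+xd(y),$$
which says exactly that $f$ is a generalized derivation, with the same associated linear map $d$. I do not expect a genuine obstacle here; the result really is a formal corollary. The only points demanding care are checking that the hypotheses of \cite[Lemma 4.1]{B2} (unitality and $2$-torsion freeness) do hold for $\mathscr{D}(\Lambda)$, handling the trivial/disconnected cases so that Theorem \ref{3.4} is applicable, and noting that the associated mapping is unchanged in passing from ``generalized Jordan derivation'' to ``generalized derivation''.
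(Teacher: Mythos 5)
Your proposal is correct and follows exactly the route the paper intends: the paper offers no written proof beyond declaring the corollary a consequence of \cite[Lemma 4.1]{B2} (which gives $f=\mathrm{L}_{f(1)}+d$ with $d$ a Jordan derivation) and Theorem \ref{3.4}, and your argument is precisely that deduction spelled out. Your extra care with the disconnected and trivial cases is a reasonable refinement but not a departure from the paper's approach.
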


In order to deal with Jordan generalized derivations, we need the
following two lemmas obtained in \cite{LB} by the first author and
Benkovi\v{c}.

\begin{lemma}\cite[Proposition 2.1]{LB}\label{3.7}
Let $f:\mathcal{A\rightarrow M}$ be a generalized derivation with
an associated linear mapping $d$. Then $d$ is a derivation and
$f(x) =f(1) x+d(x) $ for
all $x\in\mathcal{A}.$
\end{lemma}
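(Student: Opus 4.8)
The plan is to recover both conclusions from the single identity $f(xy)=f(x)y+xd(y)$ by specialising one variable at a time to the identity $1\in\mathcal{A}$, using that $\mathcal{M}$ is a unital $(\mathcal{A},\mathcal{A})$-bimodule.

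First I would set $x=1$ in the defining relation. Since $1\cdot d(y)=d(y)$, this immediately yields
\[
f(y)=f(1)y+d(y)\qquad(y\in\mathcal{A}),
\]
which is the asserted formula $f(x)=f(1)x+d(x)$; equivalently $d(x)=f(x)-f(1)x$. (Setting instead $y=1$ gives $f(x)=f(x)+xd(1)$, whence $xd(1)=0$ for all $x$ and in particular $d(1)=0$; this is a harmless byproduct, not needed below.) Then, to see that $d$ is a derivation, I would substitute the relation $d(x)=f(x)-f(1)x$ into the two sides of the derivation identity and compare. On one side,
\[
d(xy)=f(xy)-f(1)(xy)=f(x)y+xd(y)-f(1)xy,
\]
and on the other side
\[
d(x)y+xd(y)=\bigl(f(x)-f(1)x\bigr)y+xd(y)=f(x)y-f(1)xy+xd(y),
\]
the step $(f(1)x)y=f(1)(xy)$ being exactly the associativity of the two-sided action on $\mathcal{M}$. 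The two expressions agree, so $d(xy)=d(x)y+xd(y)$ for all $x,y\in\mathcal{A}$, i.e.\ $d$ is a derivation from $\mathcal{A}$ into $\mathcal{M}$; its linearity is already part of the hypothesis.

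There is no genuine obstacle here: the argument is a two-line manipulation once the relation is evaluated at $1$. The only point deserving attention is to keep track of the bimodule axioms --- the unital condition $1\cdot m=m=m\cdot 1$ and the compatibility $(am)b=a(mb)$ --- which hold by assumption (and are automatic in the $\mathcal{M}=\mathcal{A}$ case of the paper's own definition of a generalized derivation).
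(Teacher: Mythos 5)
Your proof is correct, and it is the standard argument for this result (the paper itself gives no proof, citing \cite[Proposition 2.1]{LB}, where essentially the same specialise-at-$1$-and-substitute-back computation is used). The only point worth keeping explicit, which you do, is that $\mathcal{M}$ is assumed to be a unital bimodule so that $1\cdot d(y)=d(y)$ and $(f(1)x)y=f(1)(xy)$.
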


\begin{lemma}\cite[Theorem 2.2]{LB}\label{3.8}
Let $f:\mathcal{A\rightarrow M}$ be a Jordan generalized
derivation with an associated linear mapping $d$. The following
statements are equivalent:
\begin{enumerate}
\item[(1)] Element $f(1)$ belongs to the center of
$\mathcal{M}$.

\item[(2)] The mapping $d$ is a Jordan derivation and
$f(x)=f(1)x+d(x)$ for all $x\in\mathcal{A}$.
\end{enumerate}
\end{lemma}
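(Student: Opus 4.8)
The plan is to establish the two implications separately, in each case by substituting suitable elements into the defining identity $f(x\circ y)=f(x)\circ y+x\circ d(y)$ and making repeated use of the fact that $\mathcal{M}$ is $2$-torsion free. Throughout, write $\mathcal{Z(M)}=\{m\in\mathcal{M}\mid am=ma\text{ for all }a\in\mathcal{A}\}$ for the center of the bimodule $\mathcal{M}$, and recall that for $m\in\mathcal{M}$ and $y\in\mathcal{A}$ the symbol $m\circ y$ means $my+ym$.

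\emph{Implication} $(1)\Rightarrow(2)$. Suppose $f(1)\in\mathcal{Z(M)}$. First I would recover the formula for $f$: putting $x=1$ in the defining identity and using $1\circ y=2y$, $1\circ d(y)=2d(y)$, and $f(1)\circ y=f(1)y+yf(1)=2f(1)y$, one obtains $2f(y)=2f(1)y+2d(y)$, whence $f(y)=f(1)y+d(y)$ by $2$-torsion freeness. Next I would substitute this formula back into the defining identity. On the right-hand side, $f(x)\circ y=(f(1)x)\circ y+d(x)\circ y$, and since $yf(1)=f(1)y$ we get $(f(1)x)\circ y=f(1)xy+yf(1)x=f(1)(xy+yx)=f(1)(x\circ y)$; on the left-hand side $f(x\circ y)=f(1)(x\circ y)+d(x\circ y)$. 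Cancelling the common summand $f(1)(x\circ y)$ leaves $d(x\circ y)=d(x)\circ y+x\circ d(y)$, so $d$ is a Jordan derivation.

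\emph{Implication} $(2)\Rightarrow(1)$. Suppose now that $d$ is a Jordan derivation and $f(x)=f(1)x+d(x)$ for all $x\in\mathcal{A}$. Then
\[
f(x\circ y)=f(1)(x\circ y)+d(x\circ y)=f(1)(x\circ y)+d(x)\circ y+x\circ d(y),
\]
while
\[
f(x)\circ y+x\circ d(y)=(f(1)x)\circ y+d(x)\circ y+x\circ d(y)=f(1)xy+yf(1)x+d(x)\circ y+x\circ d(y).
\]
Since $f$ is a Jordan generalized derivation with associated mapping $d$, the two expressions agree, forcing $f(1)(xy+yx)=f(1)xy+yf(1)x$, i.e. $f(1)yx=yf(1)x$ for all $x,y\in\mathcal{A}$. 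Taking $x=1$ gives $f(1)y=yf(1)$ for every $y$, that is, $f(1)\in\mathcal{Z(M)}$.

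I do not anticipate a genuine obstacle: the argument is a short chain of substitutions. The points that require care are (i) invoking the centrality of $f(1)$ precisely at the moment one needs to commute it past an element of $\mathcal{A}$ inside a Jordan product; (ii) using $2$-torsion freeness to divide the relation $2f(y)=2f(1)y+2d(y)$; and (iii) remembering that the defining identity for $f$ is part of the hypotheses of the lemma, so it may be used freely in both directions. If desired, one may normalise at the outset by putting $y=1$ in the defining identity, which yields $xd(1)+d(1)x=0$ and hence $d(1)=0$, though this is not needed for either implication above.
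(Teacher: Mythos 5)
Your argument is correct. Note, however, that the paper does not prove this lemma at all: it is imported verbatim as \cite[Theorem 2.2]{LB}, so there is no in-paper proof to compare against. Your two substitutions (setting $x=1$ to extract $f(y)=f(1)y+d(y)$ and then cancelling $f(1)(x\circ y)$ against $(f(1)x)\circ y$ via the centrality of $f(1)$, and conversely specialising $f(1)yx=yf(1)x$ at $x=1$) are exactly the standard computation one would expect in the cited source, and you correctly flag the only two points where hypotheses are genuinely consumed: centrality of $f(1)$ and $2$-torsion freeness of $\mathcal{M}$.
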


Let $f$ be a Jordan generalized derivation on a dual extension
algebra $\mathscr{D}(\Lambda)$. It follows from Lemma \ref{3.7} ,
Lemma \ref{3.8} and Theorem \ref{3.4} that if $f(1)\in
\mathcal{Z}(\mathscr{D}(\Lambda))$, then every Jordan generalized
derivation is a generalized derivation. In order to prove that
$f(1)\in \mathcal{Z}(\mathscr{D}(\Lambda))$, the following lemma
obtained in \cite{LB} will play an important role.

\begin{lemma}\cite[Lemma 2.4]{LB}\label{3.9}
Let $f:\mathcal{A}\rightarrow\mathcal{M}$ be a Jordan generalized
derivation with an associated linear mapping $d\colon \mathcal{A}\rightarrow\mathcal{M}%
$. Then the following holds:
\begin{enumerate}
\item[(1)] $[ [ x, y] ,f(1)]  =0$ for all $x,y\in\mathcal{A}$;

\item[(2)] $f(1)=ef(1)e+(1-e) f(1)(1-e)$ for any
idempotent $e\in\mathcal{A}$.
\end{enumerate}
\end{lemma}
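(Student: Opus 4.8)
The plan is to exploit the defining identity $f(x\circ y)=f(x)\circ y+x\circ d(y)$ together with the commutativity of the Jordan product, after first isolating the part of $f$ coming from $f(1)$. Setting $x=1$ in the defining identity (and using that the bimodule $\mathcal M$ is unital) gives $2f(y)=f(1)\circ y+2d(y)$, hence
\[
f(y)=d(y)+\tfrac12\bigl(f(1)\circ y\bigr)\qquad(y\in\mathcal A);
\]
taking $y=1$ shows in passing that $d(1)=0$. This formula is the key device: it replaces every $f$ by $d$ plus a term linear in $f(1)$, and in each computation the $d$-terms will be arranged so that they cancel.

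For (1), I would write the defining identity a second time with $x$ and $y$ interchanged; since $x\circ y=y\circ x$ this yields $f(x)\circ y+x\circ d(y)=f(y)\circ x+y\circ d(x)$. Substituting the displayed formula for $f(x)$ and $f(y)$, the terms $d(x)\circ y+x\circ d(y)$ match $y\circ d(x)+d(y)\circ x$ (the Jordan product is commutative, also when one factor lies in $\mathcal M$), so they cancel and we are left with $(f(1)\circ x)\circ y=(f(1)\circ y)\circ x$. Expanding both sides by the bimodule axioms — with $c:=f(1)$ this reads $cxy+xcy+ycx+yxc=cyx+ycx+xcy+xyc$ — and cancelling the common terms $xcy$ and $ycx$ gives $c(xy-yx)=(xy-yx)c$, i.e.\ $[[x,y],f(1)]=0$.

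For (2), I would specialize the defining identity to $x=y=e$ with $e$ idempotent: since $e\circ e=2e$ this becomes $2f(e)=f(e)\circ e+e\circ d(e)$, that is $2f(e)=f(e)e+ef(e)+ed(e)+d(e)e$. Apply the Peirce projection $m\mapsto e\,m\,(1-e)$ to this equation; the two terms ending in $e$, namely $f(e)e$ and $d(e)e$, are annihilated by the trailing $(1-e)$, and using $e^2=e$ the surviving factors of $e$ collapse, leaving $2\,ef(e)(1-e)=ef(e)(1-e)+ed(e)(1-e)$, hence $ef(e)(1-e)=ed(e)(1-e)$. On the other hand, putting $y=e$ in the displayed formula for $f(e)$ and applying the same projection gives $ef(e)(1-e)=ed(e)(1-e)+\tfrac12\,ef(1)(1-e)$, since $e\,(f(1)e)\,(1-e)=0$ while $e\,(ef(1))\,(1-e)=ef(1)(1-e)$. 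Comparing and using $2$-torsion freeness yields $ef(1)(1-e)=0$, and the symmetric projection $m\mapsto(1-e)\,m\,e$ yields $(1-e)f(1)e=0$. Since $f(1)=ef(1)e+ef(1)(1-e)+(1-e)f(1)e+(1-e)f(1)(1-e)$, we obtain $f(1)=ef(1)e+(1-e)f(1)(1-e)$.

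Everything here is routine once the formula $f(y)=d(y)+\tfrac12(f(1)\circ y)$ is available, so I do not expect a serious obstacle; the only things to watch are the bookkeeping between the left and right actions of $\mathcal A$ on $\mathcal M$ (because $f(1)\in\mathcal M$ rather than $\mathcal A$) and the standing assumptions that $\mathcal M$ is a unital, $2$-torsion free bimodule. The one step that is not purely mechanical is the choice of substitution: the symmetrization $x\leftrightarrow y$ for part (1), and the diagonal choice $x=y=e$ followed by the two Peirce projections for part (2).
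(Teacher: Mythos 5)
The paper does not prove Lemma \ref{3.9}; it is quoted verbatim from \cite[Lemma 2.4]{LB}, so there is no in-paper argument to compare against. Your proof is correct and is the natural direct argument: the identity $2f(y)=f(1)\circ y+2d(y)$ obtained from $x=1$, the symmetrization $x\leftrightarrow y$ for part (1), and the Peirce projections of the $x=y=e$ instance for part (2) all check out (I verified the expansion $(f(1)\circ x)\circ y-(f(1)\circ y)\circ x=[f(1),[y,x]]$ and both projection computations). The only point to tidy is your use of the fraction $\tfrac12$: the standing hypothesis in this circle of results is $2$-torsion freeness, which permits cancelling a factor of $2$ but not dividing by it, so you should keep the identity in the form $2f(y)=f(1)\circ y+2d(y)$ and multiply the symmetrized equation through by $2$ before substituting (the $d$-terms still cancel and no division is ever needed); in the present paper's setting ($K$ a field of characteristic $\neq 2$, $\mathcal M=\mathcal A$ unital) this is immaterial, but as stated the lemma is for a general bimodule. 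With that cosmetic adjustment the argument is complete.
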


\begin{lemma}\label{3.10}
Let $f$ be a Jordan generalized derivation on a dual extension
algebra $\mathscr{D}(\Lambda)$. Then $f(1)\in
\mathcal{Z}(\mathscr{D}(\Lambda))$.
\end{lemma}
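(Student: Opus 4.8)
The plan is to show that $f(1)$ commutes with a set of algebra generators of $\mathscr{D}(\Lambda)$ and then invoke the fact that the centralizer of a single element is a subalgebra. Concretely, since $\mathscr{D}(\Lambda)$ is a quotient of the path algebra of the double quiver $(\Gamma_0,\Gamma_1\cup\Gamma_1^{\ast})$, it is generated as a unital $K$-algebra by the primitive idempotents $e_j$ $(j\in\Gamma_0)$ together with the arrows in $\Gamma_1\cup\Gamma_1^{\ast}$. The centralizer $\mathcal{C}=\{x\in\mathscr{D}(\Lambda):xf(1)=f(1)x\}$ is a unital $K$-subalgebra, so it is enough to prove that each of these generators lies in $\mathcal{C}$; then $\mathcal{C}$ contains a generating set, hence $\mathcal{C}=\mathscr{D}(\Lambda)$, which is precisely the assertion $f(1)\in\mathcal{Z}(\mathscr{D}(\Lambda))$.

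First I would dispose of the idempotents using Lemma \ref{3.9}(2): applying it with $e=e_j$ gives $f(1)=e_jf(1)e_j+(1-e_j)f(1)(1-e_j)$, and multiplying this identity on the left and on the right by $e_j$ yields $e_jf(1)=e_jf(1)e_j=f(1)e_j$, so $e_j\in\mathcal{C}$. Next I would treat an arbitrary arrow $\gamma\colon s\to t$ of $\Gamma_1\cup\Gamma_1^{\ast}$. Here I would use that $\Gamma$, having no oriented cycles, in particular has no loops, and likewise for $\Gamma^{\ast}$; hence $s\neq t$, so $e_t\gamma=\gamma$ while $\gamma e_t=0$, and therefore $[e_t,\gamma]=\gamma$. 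Lemma \ref{3.9}(1) then gives $[\gamma,f(1)]=[[e_t,\gamma],f(1)]=0$, i.e. $\gamma\in\mathcal{C}$. Combining the two steps, $\mathcal{C}$ contains a generating set of $\mathscr{D}(\Lambda)$ and the proof is complete.

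I do not expect a genuine obstacle here; the statement is a short consequence of Lemma \ref{3.9}. The one point that requires care is that, unlike $\Lambda=K(\Gamma,\rho)$ itself, the dual extension $\mathscr{D}(\Lambda)$ does carry nonzero oriented cycles --- typically the paths of the form $r^{\ast}q$ with $q$ and $r$ parallel paths of $\Gamma$ --- so one cannot deduce the centrality of $f(1)$ merely by examining its effect on a path basis, and in particular the ``block diagonal'' description of $f(1)$ coming from Lemma \ref{3.9}(2) is not by itself enough. Passing to the generating set of vertices and arrows sidesteps this, the decisive observation being that in a quiver without loops every arrow connects two distinct vertices and is thus a commutator with an idempotent, which is exactly the situation controlled by Lemma \ref{3.9}(1).
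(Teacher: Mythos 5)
Your proof is correct, and the way you finish is genuinely different from (and cleaner than) the paper's. Both arguments start the same way: Lemma \ref{3.9}(2) applied to $e=e_j$ gives $e_jf(1)=e_jf(1)e_j=f(1)e_j$, and Lemma \ref{3.9}(1) applied to a commutator $p=[p,e]$ gives $[p,f(1)]=0$. The divergence is in what one commutes $f(1)$ against. The paper checks commutation with every element of the path basis: it writes $p=[p,e_{s(p)}]$ for \emph{every} nontrivial path with $s(p)\neq e(p)$, not just for arrows, and is then still left with the oriented cycles of $\mathscr{D}(\Lambda)$ (the paths $p^{\ast}q$ with $p,q$ parallel in $\Gamma$, exactly the obstruction you flag at the end). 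To handle those it applies Lemma \ref{3.9}(2) to $e=e_i+e_j$ to get $e_if(1)e_j=0$ for $i\neq j$, so that $f(1)$ is supported on trivial paths and cycles, and then relies on the defining relations $\alpha\beta^{\ast}=0$, which force the product of two nontrivial cycles to vanish, to conclude $f(1)p=pf(1)$ for cycles $p$ as well. You sidestep that entire last step by commuting $f(1)$ only with a generating set (the $e_j$ and the arrows, none of which is a loop since $\Gamma$ has no oriented cycles) and observing that the centralizer of $f(1)$ is a unital subalgebra. This buys two things: no appeal to the particular relations of the dual extension is needed, and the argument actually proves the stronger statement that $f(1)$ is central for any quotient of the path algebra of a loop-free quiver. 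The only cost is the (entirely standard, but worth stating) facts that the centralizer is a subalgebra and that vertices and arrows generate.
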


\begin{proof}
It is not difficult to see that $e_i(1-e_i)=(1-e_i)e_i=0$ for all $i\in \Gamma_0$. Applying Lemma \ref{3.9} (2) yields that
$$
e_if(1)=e_if(1)e_i=f(1)e_i.\eqno(3.1)
$$

Moreover, let $p$ be a nontrivial path with $s(p)=r$, $e(p)=t$ and
$s\neq t$. Then $p=[p, e_r]$. Combining this fact with Lemma
\ref{3.9} (2) gives that
$$
f(1)p=pf(1).\eqno(3.2)
$$

Furthermore, we claim that $e_if(1)e_j=0$ for arbitrary $i, j\in
\Gamma_0$ with $i\neq j$. In fact, Lemma \ref{3.9} (2) implies
that
$$f(1)=(e_i+e_j)f(1)(e_i+e_j)+(1-e_i-e_j)f(1)(1-e_i-e_j).\eqno(3.3)$$
Left multiplication of $(3.3)$ by $e_i$ leads to
$$
e_if(1)=e_if(1)e_i+e_if(1)e_j.\eqno(3.4)
$$
Note that
$e_if(1)=e_if(1)e_i$. Then (3.4) forces $e_if(1)e_j=0$. This
shows that the paths $p$ with $s(p)\neq e(p)$ do not appear in
the expansion of $f(1)$. Hence for all nontrivial paths $p$ with
$s(p)=e(p)$,
$$
f(1)p=pf(1).\eqno(3.5)
$$
Combining (3.1), (3.2) with
(3.5) yields that $f(1)\in \mathcal{Z}(\mathscr{D}(\Lambda))$.
\end{proof}

Applying Theorem \ref{3.4}, Lemma \ref{3.7} and Lemma \ref{3.10} yields
the following result.

\begin{proposition}
Every Jordan generalized derivation on a dual extension algebra
$\mathscr{D}(\Lambda)$ is a generalized derivation.
\end{proposition}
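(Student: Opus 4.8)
The plan is to assemble the proposition directly from the three ingredients already established: Theorem \ref{3.4}, Lemma \ref{3.7}, and Lemma \ref{3.10}. Let $f$ be a Jordan generalized derivation on $\mathscr{D}(\Lambda)$ with associated linear mapping $d$. First I would invoke Lemma \ref{3.10} to conclude $f(1)\in\mathcal{Z}(\mathscr{D}(\Lambda))$. Since the center is in particular the center of $\mathscr{D}(\Lambda)$ viewed as a bimodule over itself, the hypothesis (1) of Lemma \ref{3.8} is satisfied with $\mathcal{M}=\mathscr{D}(\Lambda)$. Hence by the equivalence in Lemma \ref{3.8}, statement (2) holds: the associated mapping $d$ is a Jordan derivation of $\mathscr{D}(\Lambda)$ and
$$
f(x)=f(1)x+d(x)\qquad\text{for all }x\in\mathscr{D}(\Lambda).
$$

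Next I would apply Theorem \ref{3.4} to the Jordan derivation $d$: every Jordan derivation of a dual extension algebra is a derivation, so $d$ is in fact a derivation. It then remains only to check that $f(x)=f(1)x+d(x)$ with $d$ a derivation makes $f$ a generalized derivation in the sense defined above, i.e. that there is a linear mapping (namely $d$ itself) with $f(xy)=f(x)y+xd(y)$. This is the routine verification
$$
f(xy)=f(1)xy+d(xy)=f(1)xy+d(x)y+xd(y)=\bigl(f(1)x+d(x)\bigr)y+xd(y)=f(x)y+xd(y),
$$
valid for all $x,y\in\mathscr{D}(\Lambda)$, using that $d$ is a derivation. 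Thus $f$ is a generalized derivation with associated derivation $d$, completing the proof.

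There is essentially no obstacle here: the real work has already been done in Lemma \ref{3.10} (which produces the centrality of $f(1)$ by exploiting that in a path algebra without oriented cycles the idempotents $e_i$ are orthogonal and that non-loop paths are commutators) and in Theorem \ref{3.4} (which reduces Jordan derivations to derivations by the source-removal induction). The proposition is the formal combination of these, together with the bookkeeping supplied by Lemmas \ref{3.7} and \ref{3.8}. The only point worth stating carefully is that one takes $\mathcal{M}=\mathscr{D}(\Lambda)$ in Lemmas \ref{3.7}, \ref{3.8} so that ``center of $\mathcal{M}$'' coincides with $\mathcal{Z}(\mathscr{D}(\Lambda))$, which is exactly what Lemma \ref{3.10} delivers.
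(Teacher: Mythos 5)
Your proposal is correct and follows exactly the route the paper intends: Lemma \ref{3.10} gives $f(1)\in\mathcal{Z}(\mathscr{D}(\Lambda))$, Lemma \ref{3.8} then yields $f(x)=f(1)x+d(x)$ with $d$ a Jordan derivation, Theorem \ref{3.4} upgrades $d$ to a derivation, and the final one-line computation confirms $f$ is a generalized derivation. The paper states this only as ``Applying Theorem \ref{3.4}, Lemma \ref{3.7} and Lemma \ref{3.10} yields the result,'' so your write-up is simply a fuller version of the same argument.
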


\bigskip

\section{Jordan derivations of generalized one-point extensions}

We introduced the notion of generalized one-point extension algebras
in \cite{LW3} and studied Lie
derivations on them. In this section, we will investigate Jordan
derivations of generalized one-point algebras. Let us first recall the definition.

Let $\Gamma=(\Gamma_0, \Gamma_1)$ be a finite quiver without
oriented cycles and $|\Gamma_0|\geq 2$. Let $\Gamma^{\ast}$ be a
quiver whose vertex set is $\Gamma_0$ and
$$\Gamma_1^{\ast}=\{\alpha^{\ast}: i\rightarrow j\mid \alpha: j\rightarrow i
\,\,\,\text{is an arrow in} \,\,\,\Gamma_1\}.$$ For a path
$p=\alpha_n\cdots\alpha_1$ in $\Gamma$, write the path
$\alpha_1^{\ast}\cdots\alpha_n^{\ast}$ in $\Gamma^{\ast}$ by
$p^{\ast}$. Given a set $\rho$ of relations, denote by
$\Lambda=K(\Gamma, \rho)$. Define the generalized one-point
extension algebra $E(\Lambda)$ to be the path algebra of the
quiver $(\Gamma_0, \Gamma_1\cup\Gamma_1^{\ast})$ with relations
$$\rho\,\,\cup\,\,\rho^{\ast}\,\,
\cup\,\,\{\alpha\beta^{\ast}\mid\alpha,\beta\in\Gamma_1\}\,\,
\cup\,\,\{\alpha^{\ast}\beta\mid\alpha,\beta\in\Gamma_1\}.$$

It is helpful to point out that if we choose a suitable
idempotent, then neither $M$ nor $N$ need not to be faithful. Let
us illustrate an example here.

\begin{example}\label{xxsec4.1}
Let $\Gamma$ be a quiver as follows
$$
\xymatrix@C=20mm{
  \bullet
  \ar@<0pt>[r]_(0){1}^{\alpha}  &
  \bullet
  \ar@<0pt>[r]_(0){2}^{\beta}  &
  \bullet &
  \bullet
  \ar@<0pt>@[r][l]_(0.5){\gamma}^(0){3}^(1){4} }
$$
and let $\Lambda=K\Gamma$. The generalized one-point extension
algebra $E(\Lambda)$ has a basis $$\{e_1, e_2, e_3, e_4, \alpha,
\beta, \gamma, \alpha^{\ast}, \beta^{\ast}, \gamma^{\ast},
 \beta\alpha, \alpha^{\ast}\beta^{\ast}\}.$$

Taking the nontrivial idempotent to be $e_1+e_2$, then
$E(\Lambda)$ is isomorphic to a generalized matrix algebra
$\mathcal {G}=\left[
\begin{array}
[c]{cc}%
A & M\\
N & B\\
\end{array}
\right]$, where $A$ has a basis $\{e_1,\, e_2,\, \alpha,\,
\alpha^{\ast}\}$, $B$ has a basis $\{e_3,\, e_4,\, \gamma,\,
\gamma^{\ast}\}$, $M$ has a basis $\{\alpha^{\ast}\beta^{\ast},\,
\beta^{\ast}\}$ and $N$ has a basis $\{\beta,\, \beta\alpha\}$. It
is easy to check that $\alpha\in {\rm Ann}(_AM)$ and $\gamma\in
{\rm Ann}(M_B)$, that is, $M$ is neither faithful as left
$A$-module nor as right $B$-module. Similarly, we obtain
$\gamma\in {\rm Ann}(_BN)$ and $\alpha\in {\rm Ann}(N_A)$ that
is, $N$ is neither faithful as left $B$-module nor as right
$A$-module.

Moreover, in \cite{B3} Benkovi\v{c} proved that for a generalized
matrix algebra $\mathcal{G}=
\left[\smallmatrix A & M\\
N & B \endsmallmatrix \right]$, if
\begin{enumerate}
\item[(1)] $aM=0$ and $Na=0$ imply that $a=0$;

\item[(2)] $Mb=0$ and $bN=0$ imply that $b=0$,
\end{enumerate}
then every Jordan derivation on $\mathcal{G}$ is the sum of a
derivation and an anti-derivation. Clearly, our example does not
satisfy Benkovi\v{c}'s conditions.
\end{example}

The aim of this section is to prove that under certain conditions,
each Jordan derivation of $E(\Lambda)$ is the sum of a derivation
and an anti-derivation. Let us first characterize anti-derivations of
generalized one-point extension algebras.

\begin{lemma}\label{xxsec4.2}
Let $\Gamma$ be a finite quiver without oriented cycles and
$\Lambda=K(\Gamma, \rho)$. Let $\Theta$ be an anti-derivation on
$E(\Lambda)$ and $\alpha\in \Gamma_1\cup\Gamma_1^{\ast}$ with
$s(\alpha)=r$ and $e(\alpha)=t$. Then
\begin{enumerate}
\item[(1)] $\Theta(e_i)=\sum\limits_{s(p)=i,\,\, or\,\,
e(p)=i}k_p^ip$;

\medskip

\item[(2)] $\Theta(\alpha)=\sum\limits_{s(p)=t,
e(p)=r}k_p^{\alpha}p.$
\end{enumerate}

Moreover, $\Theta(p)=0$ for all path $p$ with length more than
one. If there exists a nontrivial path $\beta$ such that
$\beta\alpha\neq 0$ or $\alpha\beta\neq 0$, then
$\Theta(\alpha)=0$.
\end{lemma}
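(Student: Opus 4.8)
The plan is to argue entirely inside $E(\Lambda)$, using only the anti-derivation identity $\Theta(xy)=\Theta(y)x+y\Theta(x)$, the three families of defining relations, and the absence of oriented cycles in $\Gamma$ (hence in $\Gamma^{\ast}$). Two elementary facts will be used throughout: (a) every nonzero path of positive length in $E(\Lambda)$ has all its arrows in $\Gamma_1$ or all its arrows in $\Gamma_1^{\ast}$, because the relations $\{\alpha\beta^{\ast}\}\cup\{\alpha^{\ast}\beta\}$ annihilate any path in which a $\Gamma_1$-arrow and a $\Gamma_1^{\ast}$-arrow are adjacent; and (b) a nonzero path of positive length has distinct source and target. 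For (1), apply $\Theta$ to $e_i=e_i^2$: writing $\Theta(e_i)=\sum_p c_pp$ over basis paths and using $pe_i=p\Leftrightarrow s(p)=i$ and $e_ip=p\Leftrightarrow e(p)=i$, the identity $\Theta(e_i)=\Theta(e_i)e_i+e_i\Theta(e_i)$ forces $c_p=0$ whenever $s(p)\neq i\neq e(p)$, which is exactly (1).

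For (2), factor $\alpha=e_{e(\alpha)}\alpha=\alpha e_{s(\alpha)}$ and apply $\Theta$ to each factorization; the key point is that $\alpha\,\Theta(e_{e(\alpha)})=0$ and $\Theta(e_{s(\alpha)})\,\alpha=0$. Indeed, by (1) the only paths occurring in $\Theta(e_{e(\alpha)})$ or $\Theta(e_{s(\alpha)})$ that are composable with $\alpha$ must run between $s(\alpha)$ and $e(\alpha)$, and for each such path the relevant product with $\alpha$ vanishes --- by one of the mixed relations $\{\alpha\beta^{\ast}\}\cup\{\alpha^{\ast}\beta\}$ when it is of opposite type to $\alpha$, and by (b) when it is of the same type and would close up an oriented cycle. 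Hence $\Theta(\alpha)=\Theta(\alpha)e_{e(\alpha)}=e_{s(\alpha)}\Theta(\alpha)$, i.e. $\Theta(\alpha)$ is supported on paths from $e(\alpha)$ to $s(\alpha)$, which is (2). (It is precisely here that $E(\Lambda)$ behaves better than the dual extension: one needs \emph{both} families of mixed relations.)

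That $\Theta(p)=0$ for $|p|>1$ follows by induction on $|p|$: write $p=\gamma q$ with $\gamma$ an arrow and $q$ a nonzero path of length $|p|-1$, so $\Theta(p)=\Theta(q)\gamma+q\Theta(\gamma)$; if $|q|=1$ both summands vanish by (2) and (b), and if $|q|\geq 2$ then $\Theta(q)=0$ by induction while $q\Theta(\gamma)=0$ since $\Theta(\gamma)$ is supported on paths ending at $s(\gamma)\neq s(p)$. For the final assertion we may assume $\alpha\in\Gamma_1$ (the case $\alpha\in\Gamma_1^{\ast}$ is symmetric) and that $\beta$ is an arrow: if $\beta\alpha\neq 0$, resp.\ $\alpha\beta\neq 0$, then already $\gamma\alpha\neq 0$, resp.\ $\alpha\gamma\neq 0$, for the arrow $\gamma$ of $\beta$ incident to $\alpha$, since the relations generate a two-sided ideal. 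Suppose $\beta\alpha\neq 0$ with $\beta\colon e(\alpha)\to v$. By (b), $\beta^{\ast}\alpha=0$ in $E(\Lambda)$, so $0=\Theta(\beta^{\ast}\alpha)=\Theta(\alpha)\beta^{\ast}+\alpha\Theta(\beta^{\ast})$; the second summand vanishes by (2) and (b), hence $\Theta(\alpha)\beta^{\ast}=0$. Writing $\Theta(\alpha)=\sum_q k^{\alpha}_q q^{\ast}$ as in (2), with $q$ ranging over unstarred paths $s(\alpha)\to e(\alpha)$, this reads $\bigl(\beta\cdot\sum_q k^{\alpha}_q q\bigr)^{\ast}=0$, i.e. $\beta\cdot\sum_q k^{\alpha}_q q=0$ in $\Lambda$; since $\beta\alpha\neq 0$ and distinct paths of $\Lambda$ are linearly independent, all $k^{\alpha}_q=0$, so $\Theta(\alpha)=0$. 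The case $\alpha\beta\neq 0$ is the exact mirror, using $\alpha\beta^{\ast}=0$ and $\beta^{\ast}\Theta(\alpha)=0$.

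I expect the only genuinely delicate point to be this last passage, from $\Theta(\alpha)\beta^{\ast}=0$ to $\Theta(\alpha)=0$: all the earlier identities are forced for purely positional reasons, whereas here one must exclude cancellation among the several paths $q$ from $s(\alpha)$ to $e(\alpha)$ that could a priori occur in $\Theta(\alpha)$. The argument as sketched is clean when $\rho$ consists of paths, so that paths form a basis of $\Lambda$ --- the situation in the paper's examples; in general one has to look more carefully at the structure of the $\Lambda$-bimodule $e_{e(\alpha)}\Lambda\, e_{s(\alpha)}$.
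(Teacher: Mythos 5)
Your proof follows the same route as the paper's: expand $\Theta(e_i)$ and use $e_i^2=e_i$ for (1), the two factorizations $\alpha=e_t\alpha=\alpha e_r$ for (2), and the vanishing of a mixed product for the last assertion; so in outline the two arguments agree. In the details you are actually more careful than the paper at two points. First, the paper states but never proves the claim that $\Theta(p)=0$ for $|p|>1$; your induction supplies it. Second, for the final assertion the paper applies $\Theta$ to $\alpha\beta^{\ast}$ and asserts that the summand $\beta^{\ast}\Theta(\alpha)$ is nonzero; with the stated orientation conventions that summand is always zero for composability reasons (the paths in $\Theta(\alpha)$ end at $s(\alpha)=r$, while $\beta^{\ast}$ starts at $e(\beta)$, and $e(\beta)=r$ would force an oriented cycle), so the product that genuinely constrains $\Theta(\alpha)$ is the one you use, namely $0=\Theta(\beta^{\ast}\alpha)=\Theta(\alpha)\beta^{\ast}+\alpha\Theta(\beta^{\ast})$, which isolates $\Theta(\alpha)\beta^{\ast}=\sum_qk_q^{\alpha}(\beta q)^{\ast}$. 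Finally, the caveat you raise at the end is genuine and is not resolved in the paper either: to pass from $\sum_qk_q^{\alpha}(\beta q)^{\ast}=0$ to $k_q^{\alpha}=0$ one needs the elements $\beta q$ to be nonzero and linearly independent in $\Lambda$, which can fail for non-monomial $\rho$ (for instance a second arrow $q\colon r\to t$ with $\beta q\in\rho$ leaves $k_q^{\alpha}$ unconstrained); the paper's displayed ``$\neq 0$'' tacitly assumes the same nondegeneracy. In the paper's eventual application, Theorem \ref{xxsec4.6}, there are no paths of length greater than one, so the problematic assertion is vacuous there, but as a standalone statement the last sentence of the lemma needs this extra hypothesis.
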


\begin{proof}
(1) Suppose that
$$
\Theta(e_r)=\sum_{i\in
\Gamma_0}k_ie_i+\sum_{s(p)\neq e(p)}k_p^rp.\eqno(4.1)
$$
It follows
from the fact $e_r^2=e_i$ that
$$
\Theta(e_r)=\Theta(e_r)e_r+e_r\Theta(e_r).\eqno(4.2)
$$
Combining (4.1) with (4.2) gives that $k_r=0$. If there exists
$j\in \Gamma_0$ with $i\neq j$ such that $k_j\neq 0$, then the
coefficient of $e_j$ in the expansion of $\Theta(e_r)e_j$ is
$k_j$. On the other hand, since $e_j$ does not appear in the
expansion of $\Theta(e_j)$, we conclude that $e_j$ does not appear in
the expansion of $e_r\Theta(e_j)$ too. This implies that
$\Theta(e_je_r)\neq 0$, which is impossible.

\smallskip

(2) Let $\Theta$ be an anti-derivation on $E(\Lambda)$ and let
$\alpha\in \Gamma_1$ with $s(\alpha)=r$ and $e(\alpha)=t$. Suppose that
$$
\Theta(\alpha)=\sum_{i\in \Gamma_0}k_ie_i+\sum_{s(p)\neq
e(p)}k_p^{\alpha}p. \eqno(4.3)
$$
Then on one hand,
$$
\Theta(\alpha)=\Theta(e_t\alpha)=\Theta(\alpha)e_t+\alpha\Theta(e_t). \eqno(4.4)
$$
Substituting (4.3) into (4.4) gives that
\begin{eqnarray*}
& &\sum_{i\in \Gamma_0}k_ie_i+\sum_{s(p)\neq e(p)}k_p^{\alpha}p\\
&=&(\sum_{i\in \Gamma_0}k_ie_i+\sum_{s(p)\neq
e(p)}k_p^{\alpha}p)e_t+\alpha\Theta(e_t)\\
&=&k_te_t+\sum_{s(p)=t}k_p^{\alpha}p+\alpha\Theta(e_t).
\end{eqnarray*}
On the other hand,
$$
\Theta(\alpha)=\Theta(\alpha e_r)=e_r\Theta(\alpha)+\Theta(e_r)\alpha. \eqno(4.5)
$$
Substituting (4.3) into (4.5) yields that
\begin{eqnarray*}
& &\sum_{i\in \Gamma_0}k_ie_i+\sum_{s(p)\neq e(p)}k_p^{\alpha}p\\
&=&e_r(\sum_{i\in \Gamma_0}k_ie_i+\sum_{s(p)\neq
e(p)}k_p^{\alpha}p)+\Theta(e_r)\alpha\\
&=& k_re_r+\sum_{e(p)=r}k_p^{\alpha}p+\Theta(e_r)\alpha.
\end{eqnarray*}
By the above equalities it follows that
\begin{eqnarray*}
\sum_{i\in \Gamma_0}k_ie_i+\sum_{s(p)\neq
e(p)}k_p^{\alpha}p&=&k_te_t+\sum_{s(p)=t}k_p^{\alpha}p+\alpha\Theta(e_t)\\
&=&k_re_r+\sum_{e(p)=r}k_p^{\alpha}p+\Theta(e_r)\alpha.
\end{eqnarray*}
This implies that $k_i=0$ for all $i\in \Gamma_0$ and the
coefficients of all paths $p$ with $s(p)\neq t$ or $e(p)\neq r$ in
the expansion of $\Theta(\alpha)$ are zero, that is,
$$
\Theta(\alpha)=\sum_{s(p)=t, e(p)=r}k_p^{\alpha}p.\eqno(4.6)
$$
If there exists a non trivial path $\beta$ such that
$\beta\alpha\neq 0$, then $\alpha\beta^{\ast}=0$. However,
$\Theta(\alpha\beta^{\ast})=\Theta(\beta^{\ast})\alpha+\beta^{\ast}\Theta(\alpha)$.
If $\Theta(\alpha)\neq 0$, then by (4.5) we know that
$$
\beta^{\ast}\Theta(\alpha)=\sum_{s(p)=t, e(p)=r}k_p^{\alpha}\beta
p\neq 0,
$$
and hence $\Theta(\alpha\beta^{\ast})\neq 0$, which is a
contradiction. This forces that $\Theta(\alpha)=0$. Similarly, we can
show that if $\alpha\beta\neq 0$, then $\Theta(\alpha)=0$.
\end{proof}

Since $\Gamma$ is a quiver without oriented cycles, we can take a
source $i$ in $\Gamma$. Let $e_i$ be the corresponding idempotent
in $E(\Lambda)$. Then $E(\Lambda)$ is isomorphic to a generalized
matrix algebra $\mathcal{G}=
\left[\smallmatrix A & M\\
N & B \endsmallmatrix \right]$ with $A\simeq
E(\Lambda')$, where the quiver $\Gamma'$ of $\Lambda'$ is obtained
by removing the vertex $i$ and the relations starting at $i$.
Moreover, we have from the construction of $E(\Lambda)$ that the
bilinear pairings are both zero. In this case, the form $(\bigstar
2)$ of any Jordan derivation of $E(\Lambda)$ becomes as follows:

\begin{lemma}\label{xxsec4.3}
Let $\Lambda=K\Gamma$ and $E(\Lambda)$ be the generalized one-point
extension. Then an arbitrary Jordan derivation $\Theta$ on $E(\Lambda)$ is
of the form
$$
\begin{aligned}
& \Theta\left(\left[
\begin{array}
[c]{cc}%
a & m\\
n & b\\
\end{array}
\right]\right)\\ =& \left[
\begin{array}
[c]{cc}%
\delta_1(a) & am_0-m_0b+\tau_2(m)+\tau_3(n)\\
n_0a-bn_0+\nu_2(m)+\nu_3(n) & 0\\
\end{array}
\right] ,(\blacklozenge 1)\\
& \forall \left[
\begin{array}
[c]{cc}%
a & m\\
n & b\\
\end{array}
\right]\in \mathcal{G},
\end{aligned}
$$
where $m_0\in M, n_0\in N$ and
$$
\begin{aligned} \delta_1:& A \longrightarrow A, &  \tau_2:
& M\longrightarrow M, & \tau_3: & N\longrightarrow M, & \nu_2: &
M\longrightarrow N, & \nu_3: & N\longrightarrow N,
\end{aligned}
$$
are all $\mathcal {R}$-linear mappsing satisfying the following
conditions:
\begin{enumerate}
\item[(1)] $\delta_1$ is a Jordan derivation on $A$.

\item[(2)] $\tau_2(am)=a\tau_2(m)+\delta_1(a)m$ and
$\tau_2(mb)=\tau_2(m)b;$

\item[(3)] $\nu_3(bn)=b\nu_3(n)$ and
$\nu_3(na)=\nu_3(n)a+n\delta_1(a);$

\item[(4)] $\tau_3(na)=a\tau_3(n)$, $\tau_3(bn)=\tau_3(n)b$,

\item[(5)] $\nu_2(am)=\nu_2(m)a$, $\nu_2(mb)=b\nu_2(m)$
\end{enumerate}
\end{lemma}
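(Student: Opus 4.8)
The statement to prove is Lemma \ref{xxsec4.3}, which specializes the general description $(\bigstar 2)$ of a Jordan derivation of a generalized matrix algebra (Lemma \ref{3.2}) to the case where $\mathcal{G}\cong E(\Lambda)$ is a generalized one-point extension with a source vertex $i$ chosen so that $B\cong e_iE(\Lambda)e_i$. The starting point is that every Jordan derivation $\Theta$ of $\mathcal{G}$ already has the form $(\bigstar 2)$ with associated linear maps $\delta_1,\tau_2,\tau_3,\nu_2,\nu_3,\mu_4$ satisfying conditions (1)--(6) of Lemma \ref{3.2}. So the work is entirely to show that, under the extra structure of $E(\Lambda)$, three simplifications occur: the pairings $\Phi_{MN}$ and $\Psi_{NM}$ both vanish, so the $mn_0$, $m_0n$, $n_0m$, $nm_0$ terms drop out and the hypotheses $\delta_1(mn)=\tau_2(m)n+m\nu_3(n)$, $\mu_4(nm)=n\tau_2(m)+\nu_3(n)m$ become $0=0$; the corner algebra $B$ is trivial (one-dimensional, equal to $K\cdot e_i$), so $\mu_4=0$ and the $\mu_4$-terms in conditions (3),(4),(5),(6) disappear, yielding the simplified (2)--(5) in the statement; and nothing further collapses, i.e. $\tau_3$ and $\nu_2$ genuinely survive (this is the point of the section, and is what distinguishes $E(\Lambda)$ from the dual extension of Section 3).

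First I would recall from the construction of $E(\Lambda)$ that the defining relations include both $\{\alpha\beta^{\ast}\}$ and $\{\alpha^{\ast}\beta\}$ for all arrows $\alpha,\beta\in\Gamma_1$; together with the discussion already in the excerpt ("the bilinear pairings are both zero"), this immediately gives $\Phi_{MN}=0$ and $\Psi_{NM}=0$, since any product $M\cdot N$ or $N\cdot M$ in $E(\Lambda)$ is built from paths containing a factor $\alpha\beta^{\ast}$ or $\alpha^{\ast}\beta$ and hence is zero. Plugging $\Phi_{MN}=\Psi_{NM}=0$ into $(\bigstar 2)$ kills $mn_0$, $m_0n$, $n_0m$, $nm_0$; and conditions (1),(2) of Lemma \ref{3.2} lose their second halves. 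Next I would argue $B$ is trivial: because $i$ is a \emph{source} of $\Gamma$ and $\Gamma$ has no oriented cycles, there is no nontrivial path in $E(\Lambda)$ from $i$ to $i$ (an arrow out of $i$ in $\Gamma_1$ lies in $N$, an arrow into $i$ — necessarily of the form $\alpha^{\ast}$ with $\alpha\in\Gamma_1$ starting at $i$ — lies in $M$, and their composite involves $\alpha^{\ast}\beta$ or $\alpha\beta^{\ast}$, hence $0$), so $B=e_iE(\Lambda)e_i=K e_i$ is a field. Then $\mu_4$ is a Jordan derivation of $K$ and hence $\mu_4=0$; substituting $\mu_4=0$ into conditions (3)--(6) of Lemma \ref{3.2} and relabelling gives exactly conditions (2)--(5) of Lemma \ref{xxsec4.3}, and the $(4,4)$-entry $n_0m+nm_0+\mu_4(b)$ becomes $0$, matching $(\blacklozenge 1)$. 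Condition (1), that $\delta_1$ is a Jordan derivation of $A$, carries over verbatim.

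The expected main obstacle is not any single hard inequality but rather making airtight the two "structural collapse" claims — that $\Phi_{MN}=\Psi_{NM}=0$ and that $B$ is trivial — from the combinatorics of $E(\Lambda)$, since both rest on carefully tracking which paths land in which corner of the Peirce decomposition relative to the idempotent $e_i$. In particular one must check that with $i$ a source, $e_iE(\Lambda)(1-e_i)=M$ consists of paths ending at $i$ (these start with some $\alpha^{\ast}$ for $\alpha\in\Gamma_1$, since $i$ has no incoming arrows in $\Gamma$), $(1-e_i)E(\Lambda)e_i=N$ consists of paths starting at $i$ (these begin with an arrow $\alpha\in\Gamma_1$), and then any concatenation $N\!\cdot\! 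M$ or $M\!\cdot\! N$ passes through a forbidden subword. Once these two facts are in hand, the remainder of the proof is a direct comparison of $(\bigstar 2)$ with $(\blacklozenge 1)$ term by term and of conditions (1)--(6) of Lemma \ref{3.2} with (1)--(5) of Lemma \ref{xxsec4.3}, which is routine. I would also remark explicitly that, by contrast with Lemma \ref{3.3} for the dual extension, here $\tau_3$ and $\nu_2$ are \emph{not} forced to be zero — the argument of Lemma \ref{3.3} used condition (5)/(6) of Lemma \ref{3.2} together with the specific shape of $N$ (resp. $M$) in a source corner, and that argument does not go through for $E(\Lambda)$ — which is precisely why proper Jordan derivations can occur in this setting.
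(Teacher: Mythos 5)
Your proposal is correct and follows essentially the same route as the paper: the paper sets up the decomposition with a source vertex $i$ and notes in the text preceding the lemma that both pairings vanish by construction, and its proof then consists only of the observation that $\mu_4=0$ because it is a Jordan derivation of $B=K$. You merely supply the path-combinatorial details (no mixed starred/unstarred paths, hence $M\cdot N=N\cdot M=0$ and $e_iE(\Lambda)e_i=Ke_i$) that the paper leaves implicit.
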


\begin{proof}
We only need to prove $\mu_4=0$. But this is clear because $\mu_4$
is a Jordan derivation on $B=K$.
\end{proof}

In \cite{LWW}, the form of an arbitrary anti-derivation on a generalized
matrix algebra $\mathcal{G}=
\left[\smallmatrix A & M\\
N & B \endsmallmatrix \right]$ has been characterized
under the condition that $M$ being faithful as left $A$-module and
also as right $B$-module. If we remove the faithful assumption on $M$,
the form of an anti-derivation on $\mathcal{G}$ is as follows:

\begin{lemma}\label{xxsec4.4}
An additive mapping $\Theta$ is an anti-derivation of $\mathcal{G}$ if
and only if $\Theta$ has the form
$$
\begin{aligned}
& \Theta\left(\left[
\begin{array}
[c]{cc}%
a & m\\
n & b\\
\end{array}
\right]\right)  \\=& \left[
\begin{array}
[c]{cc}%
\delta_1(a) & am_0-m_0b+\tau_3(n)\\
n_0a-bn_0+\nu_2(m) & \mu_4(b)\\
\end{array}
\right] , (\blacklozenge 2) \\& \forall \left[
\begin{array}
[c]{cc}%
a & m\\
n & b\\
\end{array}
\right]\in \mathcal{G},
\end{aligned}
$$
where $m_0\in M, n_0\in N$ satisfying for all $a, a'\in A$, $b,
b'\in B$, $m\in M$ and $n\in N$
\begin{enumerate}
\item[{\rm(1)}] $[a, a']m_0=0$, $m_0[b, b']=0$, $n_0[a, a']=0$,
$[b, b']n_0=0;$ \item[{\rm(2)}] $m_0n=0$, $nm_0=0$, $mn_0=0$,
$n_0m=0;$
\end{enumerate} and
$$
\begin{aligned} \delta_1: & A\longrightarrow A, & \tau_3: & N\longrightarrow M, &
\nu_2: & M\longrightarrow N, & \mu_4: & B\longrightarrow B
\end{aligned}
$$
are $\mathcal{R}$-linear mappings satisfying for all $a\in A$, $b\in
B$, $m, m'\in M$ and $n, n'\in N$
\begin{enumerate}
\item[{\rm(3)}] $\delta_1$ is an anti-derivation on $A$ and
$\delta_1(mn)=0,$ $\delta_1(a)m=0$, $n\delta_1(a)=0;$
\item[{\rm(4)}] $\mu_4$ is an anti-derivation on $B$ and
$\mu_4(nm)=0,$ $m\mu_4(b)=0,$ $\mu_4(b)n=0;$ \item[{\rm(5)}]
$\tau_3(na)=a\tau_3(n)$, $\tau_3(bn)=\tau_3(n)b$, $n\tau_3(n')=0$,
$\tau_3(n)n'=0;$ \item[{\rm(6)}] $\nu_2(am)=\nu_2(m)a$,
$\nu_2(mb)=b\nu_2(m)$, $m\nu_2(m')=0$, $\nu_2(m)m'=0$.
\end{enumerate}
\end{lemma}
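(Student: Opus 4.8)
The plan is to argue along the same lines as the characterizations of derivations and Jordan derivations in Lemmas~\ref{3.1} and~\ref{3.2} and of anti-derivations in the faithful case in \cite{LWW}, working throughout with the Peirce decomposition of $\mathcal{G}$ relative to the idempotent $e=\left[\smallmatrix 1_A & 0\\ 0 & 0\endsmallmatrix \right]$, so that $\mathcal{G}=\mathcal{G}_{11}\oplus\mathcal{G}_{12}\oplus\mathcal{G}_{21}\oplus\mathcal{G}_{22}$ with $\mathcal{G}_{11}=A$, $\mathcal{G}_{12}=M$, $\mathcal{G}_{21}=N$, $\mathcal{G}_{22}=B$, $\mathcal{G}_{ij}\mathcal{G}_{kl}=0$ for $j\neq k$, and $\mathcal{G}_{ij}\mathcal{G}_{jl}\subseteq\mathcal{G}_{il}$. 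One first pins down $\Theta$ on the idempotents and on each Peirce component in order to force the shape $(\blacklozenge 2)$, and then feeds the anti-derivation identity back in on homogeneous arguments to extract the conditions (1)--(6). The ``if'' part is then a mechanical substitution, so the substance is in the ``only if'' part.

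For the form of $\Theta$: since $\mathcal{G}$ is $2$-torsion free, $\Theta(1)=\Theta(1\cdot 1)=2\Theta(1)$ gives $\Theta(1)=0$, hence $\Theta(1-e)=-\Theta(e)$; expanding $\Theta(e)=\Theta(e^2)=\Theta(e)e+e\Theta(e)$ and comparing components kills the $A$- and $B$-components of $\Theta(e)$, so $\Theta(e)=m_0+n_0$ for some $m_0\in M$, $n_0\in N$. For $m\in M$ one applies the anti-derivation rule to the two factorizations $m=em$ and $m=m(1-e)$: the cross terms $m\Theta(e)$ and $\Theta(1-e)m$ lie in $A$ and in $B$ respectively, so comparing Peirce components forces $\Theta(m)\in N$ together with $mn_0=0$ and $n_0m=0$; set $\nu_2(m):=\Theta(m)$. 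Symmetrically, factoring $n\in N$ as $(1-e)n=ne$ yields $\Theta(n)\in M$ with $m_0n=0$ and $nm_0=0$; set $\tau_3(n):=\Theta(n)$. Factoring $a\in A$ as $ea=ae$ gives $\Theta(a)=\delta_1(a)+am_0+n_0a$ with $\delta_1(a)$ the $A$-component of $\Theta(a)$, and factoring $b\in B$ similarly gives $\Theta(b)=\mu_4(b)-m_0b-bn_0$. The maps $\delta_1,\tau_3,\nu_2,\mu_4$ are $\mathcal{R}$-linear because $\Theta$ is, and by additivity of $\Theta$ this already yields the form $(\blacklozenge 2)$ and condition (2).

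It remains to check that, for a map of the shape $(\blacklozenge 2)$, the anti-derivation identity $\Theta(xy)=\Theta(y)x+y\Theta(x)$ is equivalent to (1), (3), (4), (5), (6). As both sides are $\mathcal{R}$-bilinear in $(x,y)$ and $\mathcal{G}$ is the direct sum of its four Peirce pieces, it suffices to test the identity on the sixteen pairs of homogeneous arguments. Substituting $(\blacklozenge 2)$ and comparing components case by case, one reads off from $(A,A)$ and $(B,B)$ that $\delta_1$ and $\mu_4$ are anti-derivations and that $[a,a']m_0=n_0[a,a']=m_0[b,b']=[b,b']n_0=0$; from $(M,A)$, $(A,N)$, $(B,M)$, $(N,B)$, $(M,N)$, $(N,M)$ the relations $\delta_1(a)m=n\delta_1(a)=m\mu_4(b)=\mu_4(b)n=0$ and $\delta_1(mn)=\mu_4(nm)=0$; from $(N,A)$, $(B,N)$, $(A,M)$, $(M,B)$ the bimodule relations $\tau_3(na)=a\tau_3(n)$, $\tau_3(bn)=\tau_3(n)b$, $\nu_2(am)=\nu_2(m)a$, $\nu_2(mb)=b\nu_2(m)$; and from the pairs lying in $N$ and in $M$ the relations $n\tau_3(n')=\tau_3(n)n'=m\nu_2(m')=\nu_2(m)m'=0$. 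The extra equations that surface in these cases --- such as $(mn)m_0=0$, $(bn_0)m=0$, or the various $\Phi_{MN}$- and $\Psi_{NM}$-terms --- are not new: each reduces to $0$ after reassociating inside $\mathcal{G}$ and invoking an annihilation relation from (2), or by the $A$- and $B$-bilinearity of the pairings $\Phi_{MN},\Psi_{NM}$; the pairs $(A,B)$ and $(B,A)$ similarly produce only identities that hold by associativity. The main obstacle is precisely this bookkeeping, namely carrying out all sixteen comparisons cleanly and checking that every deduction is reversible; once that is done, the ``if'' direction follows by running the same substitution in reverse, expanding the $2\times 2$ products and using (1)--(6) to equate the two sides.
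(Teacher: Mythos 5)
Your proof is correct and follows exactly the route the paper intends: the paper's own ``proof'' is a one-line appeal to the analogous computation in \cite[Proposition 3.6]{LWW}, and what you have written out --- pinning down $\Theta(e)$, $\Theta(m)$, $\Theta(n)$, $\Theta(a)$, $\Theta(b)$ via the Peirce decomposition and then testing the anti-derivation identity on the sixteen homogeneous pairs --- is precisely that computation carried out without the faithfulness hypothesis. In particular your observation that the apparent extra constraints (e.g.\ $(mn)m_0=0$, $(bn_0)m=0$) all reduce to condition (2) by reassociating, and that $(A,B)$ and $(B,A)$ yield only associativity identities, is what makes the stated list (1)--(6) exactly the necessary and sufficient set, so your write-up in fact supplies the details the paper omits.
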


\begin{proof}
It can be proved similarly as that of \cite[Proposition 3.6]{LWW}.
\end{proof}

As a consequence of Lemma \ref{xxsec4.3} and Lemma \ref{xxsec4.4} we have

\begin{proposition}\label{xxsec4.5}
Let $\Theta$ be a Jordan derivation on a generalized one-point
extension algebra $E(\Lambda)\simeq \mathcal{G}=
\left[\smallmatrix A & M\\
N & B \endsmallmatrix \right]$. If
there exists an anti-derivation $f$ on $A$ with ${\rm
Im}(f)\subset {\rm Ann}(_AM)$ such that $\delta_1-f$ is a
derivation of $A$, then $\Theta$ is the sum of a derivation and an
anti-derivation.
\end{proposition}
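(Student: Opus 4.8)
The plan is to produce the decomposition directly in terms of the data attached to $\Theta$ by Lemma~\ref{xxsec4.3}. Write $\Theta$ in the form $(\blacklozenge 1)$, with associated maps $\delta_1,\tau_2,\tau_3,\nu_2,\nu_3$ and elements $m_0\in M$, $n_0\in N$, and set
$$
\Theta_a\left(\left[\begin{array}{cc} a & m\\ n & b\end{array}\right]\right)=\left[\begin{array}{cc} f(a) & \tau_3(n)\\ \nu_2(m) & 0\end{array}\right],\qquad \Theta_d:=\Theta-\Theta_a ,
$$
so that $\Theta_d$ is of the form $(\bigstar 1)$ with data $\delta_1-f$, $\tau_2$, $\nu_3$, $\mu_4=0$ and the same $m_0,n_0$. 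The assertion to prove is then that $\Theta_a$ satisfies the conditions of Lemma~\ref{xxsec4.4} (so $\Theta_a$ is an anti-derivation of $\mathcal{G}$) and $\Theta_d$ those of Lemma~\ref{3.1} (so $\Theta_d$ is a derivation of $\mathcal{G}$); since $\Theta=\Theta_d+\Theta_a$ this is exactly the conclusion.

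Most of the verification is bookkeeping forced by two features of $E(\Lambda)$. Since $B=K$ we have $\mu_4=0$, so every clause mentioning $\mu_4$ in Lemmas~\ref{3.1} and~\ref{xxsec4.4} is vacuous, and $0$ is both a derivation and an anti-derivation of $B$. Since the pairings $\Phi_{MN}$ and $\Psi_{NM}$ both vanish, $mn=0$, $nm=0$, and all products of ``wrong type'' are zero; this disposes of the auxiliary identities $\delta_1(mn)=\tau_2(m)n+m\nu_3(n)$, $\mu_4(nm)=n\tau_2(m)+\nu_3(n)m$, $n\tau_3(n')=\tau_3(n)n'=m\nu_2(m')=\nu_2(m)m'=0$, and, since we took $m_0'=n_0'=0$, of conditions (1)--(2) of Lemma~\ref{xxsec4.4} as well. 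The $\tau_3$- and $\nu_2$-intertwining relations demanded of $\Theta_a$ by Lemma~\ref{xxsec4.4}(5)--(6) are verbatim those of Lemma~\ref{xxsec4.3}(4)--(5), and the $\tau_2$- and $\nu_3$-relations demanded of $\Theta_d$ by Lemma~\ref{3.1}(3)--(4) are those of Lemma~\ref{xxsec4.3}(2)--(3) after substituting $\delta_1=(\delta_1-f)+f$, the correction terms being $f(a)m$ and $nf(a)$ respectively. Hence the whole check reduces to: $\delta_1-f$ is a derivation of $A$ and $f$ is an anti-derivation of $A$ (the hypotheses), together with $f(a)m=0$ (the hypothesis ${\rm Im}(f)\subseteq{\rm Ann}(_AM)$) and $nf(a)=0$.

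So everything collapses to the single fact $nf(a)=0$ for all $n\in N$ and $a\in A$, i.e. ${\rm Im}(f)\subseteq{\rm Ann}(N_A)$; it is needed for Lemma~\ref{xxsec4.4}(3) and for the $\nu_3$-clause of Lemma~\ref{3.1}(4), where $\nu_3(na)=\nu_3(n)a+n\delta_1(a)$ must be rewritten as $\nu_3(na)=\nu_3(n)a+n(\delta_1-f)(a)$. I expect this to be the only genuinely non-routine point. To establish it I would use the rigidity of anti-derivations on $A=E(\Lambda')$ from Lemma~\ref{xxsec4.2} --- $f$ kills every path of length $\ge 2$, each $f(\alpha)$ is supported on paths from $e(\alpha)$ to $s(\alpha)$, and $f(\alpha)=0$ whenever $\alpha$ admits a nontrivial prolongation --- together with the observation that the relations $\{\alpha\beta^{\ast}\}\cup\{\alpha^{\ast}\beta\}$ of $E(\Lambda)$ forbid two consecutive arrows of opposite kind, so every monomial surviving in $E(\Lambda)$ is purely unstarred or purely starred, with $M$ lying in the unstarred part and $N$ in the starred part. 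Hence for $n\in N$ one has $nq=0$ for every unstarred path $q$ of positive length, so the only possible contributions to $Nf(A)$ come from the purely starred monomials occurring in the various $f(\alpha)$ and $f(e_j)$. Such monomials already annihilate $M$, so the hypothesis ${\rm Im}(f)\subseteq{\rm Ann}(_AM)$ does not exclude them; to rule them out one must use that $\Theta$ is a genuine Jordan derivation, so the map $\nu_3$ really exists and satisfies $\nu_3(na)=\nu_3(n)a+n\delta_1(a)$. Expanding $\nu_3(na_1a_2)$ in the two obvious ways and using that $\delta_1-f$ is a derivation yields $n\big(f(a_1)a_2+a_1f(a_2)-f(a_2)a_1-a_2f(a_1)\big)=0$ for all $n,a_1,a_2$; combined with the restricted shape of $f$ and the monomial structure above, this should force the offending starred components to vanish, so that $Nf(A)=0$. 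Once this is in place, the verifications against Lemmas~\ref{3.1} and~\ref{xxsec4.4} are immediate.
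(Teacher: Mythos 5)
Your decomposition $\Theta=\Theta_d+\Theta_a$ is exactly the one the paper intends (the paper offers no proof at all, presenting the proposition as an immediate consequence of Lemmas \ref{xxsec4.3} and \ref{xxsec4.4}), and your bookkeeping of which clauses of Lemmas \ref{3.1} and \ref{xxsec4.4} become vacuous is correct. You are also right that everything reduces to the single identity $nf(a)=0$ for all $n\in N$, $a\in A$, which is \emph{not} among the hypotheses: ${\rm Im}(f)\subseteq{\rm Ann}(_AM)$ only gives $f(a)m=0$. That both conditions are genuinely needed is visible in the paper's own proof of Theorem \ref{xxsec4.6}, where $f(a)m=0$ and $nf(a)=0$ are verified separately. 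So you have correctly located the crux that the paper glosses over.

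However, your proposed derivation of $nf(a)=0$ does not close. The identity you extract from the $\nu_3$-relation, namely $n\bigl(f(a_1)a_2+a_1f(a_2)-f(a_2)a_1-a_2f(a_1)\bigr)=0$, combined with $n\gamma=0$ for nontrivial unstarred $\gamma$, does force $Nf(\gamma)=0$ for every arrow $\gamma$ (take $a_1=\gamma$, $a_2=e_{s(\gamma)}$), and $Nf(p)=0$ for longer paths since $f(p)=0$ by Lemma \ref{xxsec4.2}. But it gives no control over the values on trivial paths: for $a_1=e_j$, $a_2=e_k$ the identity holds identically. Concretely, for $\Gamma\colon 1\xrightarrow{\alpha}2\xrightarrow{\beta}3$ with the source $1$ removed, the map $f$ with $f(e_2)=\beta^{\ast}=-f(e_3)$ and $f=0$ on arrows is an anti-derivation of $A$ with ${\rm Im}(f)\subseteq{\rm Ann}(_AM)$, yet $\alpha^{\ast}f(e_2)=\alpha^{\ast}\beta^{\ast}\neq 0$, so $Nf(A)\neq 0$ and your $\Theta_a$ fails condition (3) of Lemma \ref{xxsec4.4}. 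In this example the proposition survives because that particular $f$ is simultaneously a derivation (it acts as $c\,[\,\cdot\,,\beta^{\ast}]$ on all relevant products) and can therefore be absorbed into $\Theta_d$ after replacing $f$ by $0$; but rescuing the general statement requires showing that the offending starred components of the $f(e_j)$ can always be split off as a map that is both a derivation and an anti-derivation, and neither your sketch nor the paper supplies such an argument. The gap you identified is real, and your closing step does not yet fill it.
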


We are now in a position to state the main result of this section.

\begin{theorem}\label{xxsec4.6}
Let $\Gamma$ be a finite quiver without oriented cycles and
$\Lambda=K(\Gamma, \rho)$. If there is no path $p$ with length
more than one, then every Jordan derivation on the generalized one
point extension algebra $E(\Lambda)$ is the sum of a derivation
and an anti-derivation.
\end{theorem}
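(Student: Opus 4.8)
The plan is to argue by induction on the number $n=|\Gamma_0|$ of vertices, feeding the reduction of Lemma \ref{xxsec4.3} into Proposition \ref{xxsec4.5}. The key preliminary observation is what the length hypothesis buys us structurally: a length-two path of $(\Gamma_0,\Gamma_1\cup\Gamma_1^{\ast})$ is either a path inside $\Gamma$, a path inside $\Gamma^{\ast}$, or a concatenation of an arrow of $\Gamma_1$ with one of $\Gamma_1^{\ast}$; the first two types do not exist since neither $\Gamma$ nor $\Gamma^{\ast}$ has a path of length greater than one, and a path of the third type coincides with one of the relations $\alpha\beta^{\ast}$ or $\alpha^{\ast}\beta$ and is therefore zero in $E(\Lambda)$. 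Hence $E(\Lambda)$ has no nonzero path of length greater than one; writing $E(\Lambda)=S\oplus J$ with $S=\bigoplus_{i\in\Gamma_0}Ke_i$ and $J={\rm rad}\,E(\Lambda)$ the span of the arrows, we get $J^{2}=0$. Deleting a vertex from $\Gamma$ preserves this hypothesis, so it passes down the induction; the case $n\leq 1$ is trivial, since then $E(\Lambda)=K$ and every Jordan derivation of $K$ vanishes.

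For $n\geq 2$, fix a source $i\in\Gamma_0$, so that $E(\Lambda)\simeq\mathcal{G}=\left[\smallmatrix A & M\\ N & B\endsmallmatrix\right]$ with $B\simeq e_iE(\Lambda)e_i\simeq K$ (there is no nontrivial path $i\to i$) and $A\simeq E(\Lambda')$, where $\Gamma'$ is $\Gamma$ with the vertex $i$ and the relations starting at $i$ removed. Let $\Theta$ be a Jordan derivation of $E(\Lambda)$. By Lemma \ref{xxsec4.3} it has the form $(\blacklozenge 1)$, with $\delta_1$ a Jordan derivation of $A$, and by Proposition \ref{xxsec4.5} it suffices to produce an anti-derivation $f$ of $A$ with ${\rm Im}(f)\subseteq{\rm Ann}(_AM)$ for which $\delta_1-f$ is a derivation of $A$. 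Since $\Gamma'$ still satisfies the hypothesis, the induction hypothesis applies to $E(\Lambda')\simeq A$ and yields a decomposition $\delta_1=d+f$ with $d$ a derivation and $f$ an anti-derivation of $A$ (when $|\Gamma'_0|=1$ we have $A=K$ and $\delta_1=0$, so simply take $f=0$).

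It then remains to check that this $f$ maps into ${\rm Ann}(_AM)$, which I would settle with two short observations. First, by Lemma \ref{xxsec4.2} applied to the anti-derivation $f$ of $A\simeq E(\Lambda')$, each $f(e_j)$ and each $f(\alpha)$ is a combination of nontrivial paths of $\Gamma'$ and $f$ annihilates paths of length greater than one; since such elements span $A$, we obtain ${\rm Im}(f)\subseteq{\rm rad}\,A$. Second, ${\rm rad}\,A\subseteq{\rm Ann}(_AM)$: both ${\rm rad}\,A$ and $M$ are spanned by arrows of $(\Gamma_0,\Gamma_1\cup\Gamma_1^{\ast})$, hence are contained in $J={\rm rad}\,E(\Lambda)$, so $({\rm rad}\,A)\cdot M\subseteq J^{2}=0$. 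Combining, ${\rm Im}(f)\subseteq{\rm rad}\,A\subseteq{\rm Ann}(_AM)$ while $\delta_1-f=d$ is a derivation, so Proposition \ref{xxsec4.5} gives that $\Theta$ is the sum of a derivation and an anti-derivation, closing the induction.

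The only place that really needs care is the opening claim that the relations $\{\alpha\beta^{\ast}\}\cup\{\alpha^{\ast}\beta\}$ together with the length hypothesis make $E(\Lambda)$ radical square zero: this is exactly what rules out nonzero ``mixed'' paths built from $\Gamma$- and $\Gamma^{\ast}$-arrows, and it is in turn what forces the inclusion ${\rm rad}\,A\subseteq{\rm Ann}(_AM)$ that makes Proposition \ref{xxsec4.5} applicable. Everything else is either quoted (Lemmas \ref{xxsec4.2} and \ref{xxsec4.3}, Proposition \ref{xxsec4.5}) or a one-line computation with idempotents and arrows.
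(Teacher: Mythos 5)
Your proof is correct and follows essentially the same route as the paper: strip off a source vertex, apply Lemma \ref{xxsec4.3} to reduce to the Jordan derivation $\delta_1$ of $A\simeq E(\Lambda')$, decompose $\delta_1=d+f$ by induction, and check that ${\rm Im}(f)$ annihilates $M$ and $N$ so that the pieces reassemble into a derivation plus an anti-derivation of $E(\Lambda)$. The only cosmetic differences are that you route the reassembly through Proposition \ref{xxsec4.5} where the paper writes the maps $f'$ and $D$ out explicitly, and that your observation that the relations force $J^2=0$ makes the annihilation step (for both $M$ and $N$, which Proposition \ref{xxsec4.5} implicitly needs) cleaner than the paper's one-line justification.
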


\begin{proof}
Let $\Theta$ be a Jordan derivation on $E(\Lambda)$. Then by Lemma
\ref{xxsec4.2} it is of the form $(\blacklozenge 1)$. We claim that
if each Jordan derivation on $A$ is the sum of a derivation and an
anti-derivation, then so is $E(\Lambda)$. In fact, assume that
$\delta_1=d+f$, where $d$ is a derivation of $A$ and $f$ is an
anti-derivation of $A$. By Lemma \ref{xxsec4.2} we know that all
$e_i$ do not appear in $f(a)$ for $a\in A$. Note that the length of
each path is not more than one. This implies that $f(a)m=0$ for all
$a\in A$ and $m\in M$. Similarly, we can show that $nf(a)=0$ for all
$a\in A$ and $n\in N$. Define a linear mapping $f'$ on $E(\Lambda)$
by
$$
\begin{aligned}
f'\left(\left[
\begin{array}
[c]{cc}%
a & m\\
n & b\\
\end{array}
\right]\right)= \left[
\begin{array}
[c]{cc}%
f(a) & \tau_3(n)\\
\nu_2(m) & 0\\
\end{array}
\right], \hspace{8pt} \forall \left[
\begin{array}
[c]{cc}%
a & m\\
n & b\\
\end{array}
\right]\in E(\Lambda)
\end{aligned}
$$
Then Lemma \ref{xxsec4.2} and Lemma \ref{xxsec4.4} give that $f'$ is
a anti-derivation of $E(\Lambda)$. Furthermore, the linear mapping
$$
\begin{aligned}
D\left(\left[
\begin{array}
[c]{cc}%
a & m\\
n & b\\
\end{array}
\right]\right)= \left[
\begin{array}
[c]{cc}%
d(a) & am_0-m_0b+\tau_2(m)\\
n_0a-bn_0+\nu_3(n) & 0\\
\end{array}
\right]
\end{aligned}
$$
is a derivation of $E(\Lambda)$. This completes
the proof of our claim. Repeating this process, we arrive at the
algebra $K$, on which every Jordan derivation is zero. This
completes the proof.
\end{proof}

Finally, we illustrate an example which satisfies the condition of
Theorem \ref{xxsec4.6}.

\begin{example}
Let $\Gamma$ be a quiver as follows
$$\xymatrix@C=25mm{
  \bullet
  \ar@<0pt>[r]_(0){1}^{\alpha}  &
  \bullet &
  \bullet
  \ar@<0pt>@[r][l]_(0.5){\beta}^(0){3}^(1){2} }$$
and let $\Lambda=K\Gamma$. Then the generalized one point extension
algebra $E(\Lambda)$ has a basis $$\{e_1,\, e_2,\, e_3,\, \alpha,\,
\beta,\,\alpha^{\ast},\, \beta^{\ast}\}.$$ Define a linear mapping
on $E(\Lambda)$ by
$$
\begin{aligned} & \Theta(e_1)=\Theta(e_2)=\Theta(e_3)=0, & &
\Theta(\alpha)=\alpha^{\ast}, & & \Theta(\alpha^{\ast})=\alpha, &
\\& \Theta(\beta)=\beta+\beta^{\ast},
 & & \Theta(\beta^{\ast})=\beta-\beta^{\ast}.&
\end{aligned}
$$
Then a direct computation shows that $\Theta$ is a proper Jordan
derivation on $E(\Lambda)$.

On the other hand, we can also define two linear mappings $\Theta_1$
and $\Theta_2$ by
$$
\begin{aligned} & \Theta_1(e_1)=\Theta_1(e_2)=\Theta_1(e_3)=0, & &
\Theta_1(\alpha)=0, & & \Theta_1(\alpha^{\ast})=0, &
\\& \Theta_1(\beta)=\beta,
 & & \Theta_1(\beta^{\ast})=-\beta^{\ast}.&
\end{aligned}
$$ and
$$
\begin{aligned} & \Theta_2(e_1)=\Theta_2(e_2)=\Theta_2(e_3)=0, & &
\Theta_2(\alpha)=\alpha^{\ast}, & &
\Theta_2(\alpha^{\ast})=\alpha, &
\\& \Theta_2(\beta)=\beta^{\ast},
 & & \Theta_2(\beta^{\ast})=\beta.&
\end{aligned}
$$
It is easy to see that $\Theta_1$ is a derivation on $E(\Lambda)$
and $\Theta_2$ is an anti-derivation on $E(\Lambda)$. Therefore,
$\Theta$ is the sum of the derivation $\Theta_1$ and the
anti-derivation $\Theta_2$.
\end{example}

\bigskip

\noindent{\bf Acknowledgement} The first author would like to
express his sincere thanks to Chern Institute of Mathematics of
Nankai University for the hospitality during his visit. He also
acknowledges Prof. Chengming Bai for his kind consideration and
warm help.

\bigskip

\vspace{1mm}

\end{document}